\documentclass[12pt]{amsart}
\usepackage{amsmath,amssymb,amsbsy,amsfonts,amsthm,latexsym,
	amsopn,amstext,amsxtra,euscript,amscd,mathrsfs,graphics,epsfig,color}
\usepackage{graphicx}

\usepackage{float}

\newtheorem{theorem}{Theorem}
\newtheorem{remark}[theorem]{Remark}
\newtheorem{lemma}[theorem]{Lemma}

\newtheorem{conj}[theorem]{Conjecture}

\newtheorem{ques}[theorem]{Question}


\def\\{\cr}
\def\({\left(}
\def\){\right)}
\def\[{\left[}
\def\]{\right]}
\def\<{\langle}
\def\>{\rangle}
\def\fl#1{\left\lfloor#1\right\rfloor}

\def\li{\mathrm{li}\,}

\renewcommand{\vec}[1]{\mathbf{#1}}

\def\cA{\mathcal A}

\def\cC{\mathcal C}

\def\cG{\mathcal G}

\def\cL{\mathcal L}

\def\cP{\mathcal P}

\def\cK{{\mathcal K}}

\def\cU{\mathcal U}
\def\cV{\mathcal V}
\def\cW{\mathcal W}
\def\cX{\mathcal X}

\def\e{{\mathbf{\,e}}}

\def\gal{\text{\rm Gal}}

\def\F{\mathbb{F}}

\def\fp{\mathbb{F}_p}
\def\fq{\mathbb{F}_q}
\def\Z{\mathbb{Z}}
\def\K{\mathbb{K}}
\def\Q{\mathbb{Q}}
\def\L{\mathbb{L}}

\def\mand{\qquad \mbox{and} \qquad}

\def\notdivides{\mathrel{\kern-3pt\not\!\kern3.5pt\bigm|}}

\begin{document}

\title{On the Fixed Points of the Map $x \mapsto x^x$ Modulo a Prime}

\author{P\"ar Kurlberg}
\address{Department of Mathematics,
Royal Institute of Technology, SE-100 44 Stockholm, Sweden}
\email{kurlberg@math.kth.se}

\author{Florian~Luca}
\address{Fundaci\'on Marcos Moshinsky, Instituto de Ciencias
Nucleares,\linebreak UNAM, Circuito Exterior, C.U., Apdo. Postal 70-543, Mexico
D.F. 04510, Mexico}
\email{fluca@matmor.unam.mx}

\author{Igor Shparlinski}
\address{Department of Computing, Macquarie University, Sydney, NSW
  2109, Australia}
\email{igor.shparlinski@mq.edu.au}

\date{\today}

\maketitle

\begin{abstract}
  In this paper, we show that for almost all primes $p$ there is an
  integer solution $x\in [2,p-1]$ to the congruence $x^{x}\equiv
  x\pmod p$.  The solutions can be interpretated as fixed points of
  the map $x \mapsto x^x \pmod p$, and we study numerically and
  discuss some unexpected properties of the dynamical system
  associated with this map.  \end{abstract}

\section{Introduction}
\label{sec:introduction}

\subsection{Motivation}

For a prime $p$, we consider the properties of the map
$$
\psi_p:  \ x \mapsto x^x \pmod p
$$
when it acts on the integers $x \in [1,p-1]$. 
By  the results Crocker~\cite{Croc}
and Somer~\cite{Som}, there
are at least $\fl{\sqrt{(p-1)/2}}$ and at most
$3p/4 + O(p^{1/2 + o(1)})$, respectively,
distinct values of
$x^x \pmod p$ when $1\le x\le p-1$. 

We also note that various estimates depending on the multiplicative 
order modulo $p$ of $a$ on the number $T(p,a)$ of solutions of the congruence
\begin{equation}
\label{eq:fix val cong}
x^x \equiv a \pmod p,  \qquad  1 \le x \le p-1,
\end{equation}
have been given in~\cite{BBS1,BBS2}. In the most favorable case of $a=1$, 
by~\cite[Corollary~5]{BBS1}, we have
\begin{equation}
\label{eq:Tp1}
T(p,1) \le p^{1/3 + o(1)}
\end{equation}
as $p\to\infty$. 
Furthermore,  by~\cite[Bound~(2)]{BBS1}, for any integer $a$
we have $T(p,a) \le p^{11/12+o(1)}$.
Moreover, it is also shown in~\cite[Theorem~8]{BBS1} that the estimate
$$
\#\{1 \le x,y \le p-1~:~x^x \equiv y^y \pmod p\} \le  p^{48/25 + o(1)}
$$
holds as $p\to \infty$.

The map $\psi_p$ also appears  in some cryptographic protocols
(see~\cite[Sections~11.70 and~11.71]{MOV}), so it certainly
deserves more attention. 
Several  conjectures and numerical data concerning this map can be found
in~\cite{HoldMor}.  

Here, we address an apparently new problem and study the 
fixed points of the map $\psi_p$.  Let $F(p)$ denote the number of fixed points of the map $\psi_p$. 
That is, 
$$
F(p)   = \#\{ 1 \le x \le p-1~:~x^x \equiv x \pmod p\}. 
$$

Obviously $x=1$ is always a 
fixed points, which we call {\it trivial\/}. We show that for most 
primes $p$ the map $\psi_p$ has a
{\it nontrivial\/} fixed point $x \in [2,p-1]$. Thus, we 
are interested in primes $p$ with $F(p) > 1$.  
In the opposite direction, it has been noted in~\cite[Theorem~8]{BBS1} that the method used to prove 
\eqref{eq:Tp1} also applies to the congruence $x^{x-1} \equiv 1 \pmod p$, 
and thus it implies the bound
\begin{equation}
\label{eq:Fp}
F(p) \le p^{1/3 + o(1)}
\end{equation}
as $p\to\infty$.

We also study the quantity $F(p)$ and other  dynamical properties
(such as the period statistics) of the map 
$\psi_p$ numerically. In particular, these numerical results reveal that a na\"{\i}ve
point of view of treating $\psi_p$ as a ``random'' function on the set  $\{1, \ldots, p-1\}$
is totally wrong. In particular, the numerical results significantly deviate from those predicted for truly random maps by the work of Flajolet and  Odlyzko~\cite{FlOdl}. 
These results indicate that $\psi_p$ tends to have shorter orbits and more 
fixed points than a random map even after removing the trivial
fixed point $x=1$. On the other hand, it is highly likely that the bound~\eqref{eq:Fp} is very
far from being tight. We give some partial explanation for the ``non-randomness''
phenomenon, and introduce the notion of {\it random endomorphisms\/} in groups, which allows 
us to give some qualitative explanation for the numerical results.  We consider developing a rigorous analysis of the random endomorphisms to be a 
challenging and important open topic.

Finally, in Section~\ref{sec:comm ext}, we study the map $x \to
x^{f(x)} \pmod p$ for general polynomials $f(X) \in \Z[X]$, and show
that such a map can  have at most $p^{6/13+o(1)}$ fixed points, as
$p\to \infty$.

\subsection{Notation}

Before we give the  precise  statement we introduce some notation.

We define 
$\log x$ as $\log x=\max\{\ln x, 2\}$ where  $\ln x$ is the natural logarithm, 
Furthermore,  for an integer $k\ge 2$, we define
recursively $\log_k x=\log(\log_{k-1}x)$.

Throughout the paper, we use the Landau symbols $O$ and $o$ and the
Vinogradov symbols $\gg$ and $\ll$ with their usual meanings. We
recall that $A=O(B)$, $A\ll B$ and $B\gg A$ are all equivalent to
the fact that $|A|<cB$ holds with some constant $c$, while $A=o(B)$
means that $A/B\to 0$.

We further define the logarithmic integral 
$
\li(N) := \int_2^N \frac{d\, t}{\log t}.
$

We always use $p$ and $q$ for prime numbers.
We also use $\varphi(k)$ and $\omega(k)$ to denote the Euler function
and the number of distinct prime divisors of 
an integer $k$. 
 
Furthermore, $\F_p$ denotes a finite field of $p$ elements, which we
consider to be represented by the elements of the set $\{0,\ldots, p-1\}$,
while $\Z/q\Z$  denotes the residue ring modulo an integer $q \ge 1$.

\subsection{Heuristics on primes without nontrivial fix
  points}  

Let us write $\cA$ for the set of prime numbers $p$ for which $\psi_p$ does not have a nontrivial fixed point $x \in [2,p-1]$:
$$
\cA = \{p~\text{prime}~:~F(p) = 1\}. 
$$ 
One easily finds that  $\cA$ is not empty. 
In particular, among the first $1000$ primes, there are precisely $72$ of
them in $\cA$. The first few elements of $\cA$ are 
\begin{equation}
\label{eq:list}
3,~5,~7,~11,~53,~59,~83,~107,~179,~227,~269,~\ldots
\end{equation}
Quite likely, the set $\cA$ is infinite, but we have not been able to prove
this unconditionally. However, we can show this under some standard conjectures 
about prime numbers. For example, assume that 
\begin{equation}
\label{eq:special p}
p\equiv 3\pmod 8\mand p-1=2q,
\end{equation}
where $q$ is prime (several elements from the above list~\eqref{eq:list}: $11$, $59$, 
$83$, $107$, $179$, $227$,
are of this form). Consider an integer solution $x$ to $x^{x-1}\equiv
1\pmod p$. Then the multiplicative order of $x$ divides $x-1$, which
is an integer less than $p-1$. However, this multiplicative order
must also divide $p-1=2q$. So, the only possibilities are that the
order of $x$ is either $2$ or $q$. If it is $2$, then $x=1$ (which is 
excluded) or 
$x=p-1$, which is not a fixed point as $\psi_p(p-1) = 1$. If it is $q$, then $q\mid x-1$, and since
$x-1<2q$, we get that $x-1=q$, so $x=q+1=(p+1)/2$. Thus, we arrive at
$$
1\equiv x^{x-1}\equiv \({(p+1)}/{2}\)^{(p-1)/2}\equiv
2^{-(p-1)/2}\equiv 2^{(p-1)/2}\pmod p,
$$
by Fermat's Little Theorem, which, in particular, implies that $2$
is a quadratic residue modulo $p$. But this is impossible as
$p\equiv 3\pmod 8$. 

Standard conjectures then suggest that $\cA$ is
infinite, and, in fact, putting 
$$
\cA(N)=\cA\cap [1,N],
$$ 
the standard heuristic on the density of primes $p$ satisfying~\eqref{eq:special p}
makes us conjecture that 
the inequality 
$$
\#\cA(N)>c_0N/(\log N)^2
$$ 
holds for all $N\ge 2$
with some positive constant $c_0$.

In Section~\ref{sec:A heurist}, we  give some further heuristic arguments
suggesting that the stronger inequality
\begin{equation}
\label{eq:AN LB}
\#\cA(N)\ge \frac{N}{(\log N)^2} \exp\((1/\ln 2+o(1))\log_3 N \log_4 N\)
\end{equation}
holds as $N\to\infty$.  In fact, in Section~\ref{sec:F=1 heurist} we
also give a heuristic argument that the ``likelyhood'' of $\psi_p$
having no nontrivial fix points is of order $\exp(- \gamma(p) \cdot
\tau(p-1) )$, where $\tau(p-1)$ denotes the number of divisors of
$p-1$ and $\gamma(p) $ is some explicit but quite irregular function
of $p$ taking values in $ (0,1)$; see~\eqref{eq:Delta-p} for more
details.  In particular, we expect that $\psi_p$ is very likely to
have nontrivial fixed points unless the number of prime factors of
$p-1$ is very small.
\subsection{Main result} 

We obtain an unconditional result in the opposite direction of the
previous heuristics, in the sense that $\cA$ is fairly sparse. In
particular, the estimate $\#\cA(N)=o(\pi(N))$ holds as $N\to\infty$,
where, as usual, for a positive real number $x$ we use $\pi(x)$ to
denote the number of primes $p\le x$.

Let 
\begin{equation}
\label{eq:theta}
\vartheta =  \frac{1}{\zeta(2)} - \frac{1}{2\zeta(2)^{2}} 
=  \frac{6\pi^2 - 18}{\pi^4} \simeq 0.4231\ldots,
\end{equation}
where $\zeta(s)$ is the Riemann zeta-function.

\begin{theorem}
\label{thm:main}
We have
$$
\#\cA(N) \le \frac{\pi(N)}{(\log_3 N)^{\vartheta + o(1)}}
$$
as $N\to \infty$.
\end{theorem}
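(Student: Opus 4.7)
The plan is to construct nontrivial fixed points of $\psi_p$ for most primes $p\le N$. Observe that $x \in [2,p-1]$ is such a fixed point iff $x^{x-1}\equiv 1\pmod p$, equivalently, $\ord_p(x)\mid x-1$. Hence it suffices to exhibit a divisor $d\mid p-1$ with $d\ge 2$ and an integer $x \in [2,p-1]$ with $x\equiv 1\pmod d$ and $x^d\equiv 1\pmod p$; that is, $x$ lies both in the subgroup $H_d\subset\F_p^*$ of $d$-th roots of unity and in the residue class $1\pmod d$.

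I would exploit the small prime divisors of $p-1$. Fix a parameter $Y=Y(N)\to\infty$ to be optimised at the end (eventually $Y=\log_2 N$). For each prime $q\le Y$ with $q\mid p-1$, the subgroup $H_q$ has $q$ elements lying in $[1,p-1]$, and we seek one of the $q-1$ nontrivial elements of $H_q$ in the class $1\pmod q$. Orthogonality of Dirichlet characters modulo $q$ gives
\[
\#\{y\in H_q: y\equiv 1\pmod q\} = \frac{1}{q}\sum_{\chi \pmod q}\overline{\chi(1)}\sum_{y\in H_q}\chi(y),
\]
where the principal character contributes $\approx 1$ and the nontrivial character sums should be controllable on average as $p$ varies over primes with $q\mid p-1$, via estimates for character sums over multiplicative subgroups of $\F_p^*$.

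The contributions from distinct small primes $q\le Y$ are then combined by a sieve / inclusion--exclusion: a prime $p\in\cA$ must fail the fixed-point criterion for every small $q\mid p-1$, so $\#\cA(N)$ is bounded by a product of per-prime failure rates. By Mertens' theorem, this product should evaluate to $\pi(N)(\log Y)^{-\vartheta+o(1)}$, and the choice $Y=\log_2 N$ then yields the advertised saving $(\log_3 N)^{-\vartheta+o(1)}$. The explicit constant $\vartheta=1/\zeta(2)-1/(2\zeta(2)^2)$ should emerge from the sieve bookkeeping; the factor $1/\zeta(2)$ most plausibly reflects a restriction to squarefree divisors of $p-1$, while $-1/(2\zeta(2)^2)$ arises as a second-order inclusion--exclusion correction, consistent with $2\vartheta=2/\zeta(2)-1/\zeta(2)^2$ being the probability that at least one of two independent squarefreeness events holds.

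The main obstacle is the quantitative lower bound underlying the second step: showing, on average over primes $p\le N$ with $q\mid p-1$, that a positive proportion admit a nontrivial element of $H_q$ congruent to $1\pmod q$. This requires character-sum bounds uniform in $q$, together with careful handling of the dependencies between different primes $q\mid p-1$ in the sieve; tracking the exact exponent $\vartheta$ through the resulting product evaluation is the delicate final step.
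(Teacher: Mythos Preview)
Your sufficient condition is sound, but the particular reduction you choose is essentially dual to the paper's and does not lead to a tractable problem. You take $d=q$ and seek a nontrivial $q$-th root of unity $y\in[1,p-1]$ with $y\equiv 1\pmod q$. The paper instead takes $d=(p-1)/q$: writing $x=1+\beta(p-1)/q$ for $\beta\in\{1,\dots,q-1\}$ one finds $x\equiv(q-\beta)/q\pmod p$, and $x$ is a fixed point iff $n/q$ is a $q$-th power in $\F_p^*$ for some $1\le n<q$. This is a splitting condition for $p$ in the Kummer extension $\Q(\zeta_q,\sqrt[q]{n/q})$, so the effective Chebotarev theorem of Lagarias--Odlyzko gives uniform control over $p\le N$; linear disjointness of these extensions across different $q$ then justifies the multiplicativity in the sieve. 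The per-$q$ failure probability $c(q)$ is bounded by translating, via Kummer theory, to a count of vectors in $\F_q^{\pi(q-1)}$ avoiding a family of linear forms indexed by $n\in[1,q-1]$; truncating inclusion--exclusion after two steps over the \emph{squarefree} $n<q$ (whose forms are pairwise independent) yields $c(q)\le 1-\vartheta+o(1)$. So the $\zeta(2)$ factors come from the density of squarefree integers below $q$, not from squarefree divisors of $p-1$ as you guessed.

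The genuine gap in your route is the character-sum step. The sum $\sum_{y\in H_q}\chi(y)$ is not a ``character sum over a multiplicative subgroup of $\F_p^*$'' in the standard sense: your $\chi$ is a Dirichlet character modulo $q$, not a character modulo $p$, so you are summing $\chi(y\bmod q)$ over the $q$ integers $y\in[1,p-1]$ with $y^q\equiv 1\pmod p$. There is no algebraic handle on the mod-$q$ residues of these integer representatives; the known subgroup-sum bounds (Bourgain--Glibichuk--Konyagin and descendants) concern additive or multiplicative characters mod $p$ and simply do not apply here, and I see no route to cancellation even on average over $p$. Worse, the principal-character contribution is $q/(q-1)\approx 1$, and this main term is entirely accounted for by the trivial element $y=1\in H_q$; to produce a \emph{nontrivial} fixed point you need the count to be at least $2$, which cannot be deduced by bounding error terms below a main term of size $1$. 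The paper's dual reduction sidesteps both difficulties because ``$n/q$ is a $q$-th power modulo $p$'' depends only on the Frobenius of $p$ in a fixed number field, not on integer lifts of elements of $\F_p^*$.
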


Our proof is based on an effective version of the Chebotarev Density Theorem
that is due to Lagarias and Odlyzko~\cite{LO}.

\section{Proof of Theorem~\ref{thm:main}}
\subsection{The strategy}

Observe that a nontrivial fixed point corresponds to a solution of the
congruence
\begin{equation}
\label{eq:Congr}
x^{x-1} \equiv 1 \pmod p, \qquad x \in \{2,3,\ldots,p-1\}.
\end{equation}
Thus, we wish to show that for almost all primes $p$ the
congruence~\eqref{eq:Congr} has a solution.

Given a
prime $p$ such that a ``small'' prime $q$ divides $p-1$, we write
$p-1 = q \cdot a$, so $a=(p-1)/q$.
For an integer $x$ of the form $x = 1+ a \beta$, with $\beta\in \{1,\ldots,q-1\}$, we have
$$
x^{x-1} \equiv ( 1 + a \beta )^{a \beta} \equiv (1 - \beta/q)^{\beta
(p-1)/q} \pmod p.
$$
Hence, we obtain a valid solution if $1 - \beta/q \equiv (q-\beta)/q
\pmod p$ is a $q$-th power modulo $p$ for some $0 < \beta <q$.  In
other words, with $n = q- \beta$, we find that 
$$
x=1+a\beta=1+\frac{1}{q}(p-1)(n-q)\in [2,p-1]
$$ 
is a solution to~\eqref{eq:Congr}
provided
that $n/q$ is a $q$-th power modulo $p$. Thus, it suffices 
to show that there exists a $q$-th power modulo $p$ of the form $n/q$ with $n \in [1,q-1]$.

Note that the
chance of a random element in the finite field of $p$ elements $\F_p$ being a $q$-th power
equals $1/q$. So, heuristically, assuming that the set of $q$-th powers 
has sufficiently random behavior, we can expect that the probability of 
this {\em not}  happening is 
$(1-1/q)^{q-1} = 1/e + o(1)$ as $q \to \infty$.

The strategy we adopt is thus to consider primes $p \equiv 1 \pmod
q$ for ``many'', say $k$, ``small'' (but not ``too small'') 
primes $q$; the ``probability'' that all
such $q$  fail to provide a valid solution $x$ to the original
congruence is expected to be about  $e^{-k}$, {\em provided} that we can show that
almost  all primes $p$ have such a property. 
We do this though not in a direct way. In particular, for the ``individual'' probability
of $q$  to fail we only obtain an upper  bound  of $1-\vartheta=0.576\ldots$ rather than
$1/e = 0.367\ldots$.

\subsection{The Chebotarev Density Theorem}

We let $\L$  be a  finite Galois extension of $\Q$ with
Galois group $G$ of degree $d= [\L:\Q]$ and discriminant
$\Delta$.  Let $\cC$ be a union of conjugacy classes of $G$.
We define
$$
\pi_\cC(N,\L/\Q) =\# \{ p \le N~:~p \text{ unramified in } \L/\Q, \
\sigma_p \in \cC\},
$$
where $\sigma_p$ is the Artin symbol of $p$ in the extension $\L/\Q$
(see~\cite{Gras}). 

A combination of  a version of the
Chebotarev Density Theorem due to Lagarias and Odlyzko~\cite{LO}
with a bound of Stark~\cite{Sta} for a possible Siegel zero, yields
the following result (see also~\cite[Lemma~6]{PomShp}).

\begin{lemma}
\label{lem:cheb} There are absolute constants $A_1, A_2 >0 $ such that
if
\begin{equation}
\label{eq:necessarybound}
\log N~\ge~10 d (\log |\Delta|)^2
\end{equation}
then
\begin{equation}
\begin{split}
\label{eq:Cheb}
\left|\pi_\cC(N,\L/\Q) -  \frac{\#\cC}{\#G} \li(N)\right| &\\
\ll
\frac{\#\cC}{\#G} &
\li\(N^\beta\) +
\|\cC\| N \exp \(-A_1 \sqrt{\frac{\log N}{d}}\)
\end{split}
\end{equation}
with some $\beta$ satisfying the inequality
$$
\beta~<~1 - \frac{A_2}{\max\{|\Delta|^{1/d}, \log |\Delta|\}},
$$
where $\|\cC\|$ is the number of conjugacy classes in $\cC$.
\end{lemma}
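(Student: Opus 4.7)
The plan is to derive the stated estimate by combining two classical inputs: the effective unconditional Chebotarev Density Theorem of Lagarias and Odlyzko~\cite{LO}, which pins down the error in the asymptotic for $\pi_\cC(N,\L/\Q)$ up to the possible contribution from a real exceptional (Siegel) zero of the Dedekind zeta function $\zeta_\L(s)$, and Stark's theorem~\cite{Sta}, which quantitatively bounds any such exceptional zero away from $s=1$.

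First, I would recall the conclusion of~\cite{LO} in the form most convenient here: assuming~\eqref{eq:necessarybound}, there is an absolute constant $A_1>0$ such that
$$
\pi_\cC(N,\L/\Q) = \frac{\#\cC}{\#G}\li(N) - \frac{\#\cC}{\#G}\li(N^{\beta_0}) + O\left(\|\cC\| N \exp\left(-A_1\sqrt{\log N/d}\right)\right),
$$
where the middle term is present only if $\zeta_\L(s)$ has a real zero $\beta_0$ in the relevant Siegel region near $s=1$. This formula is obtained in the standard way: one writes the indicator of $\cC$ as a linear combination of irreducible characters of $G$, expresses the resulting counting function via Artin $L$-functions, and applies the explicit formula together with the known zero-free region for these $L$-functions. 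The threshold~\eqref{eq:necessarybound} is exactly what is needed for the resulting error to take the displayed form. I would invoke this verbatim from~\cite{LO}, and cite the clean reformulation recorded in~\cite[Lemma~6]{PomShp}.

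Next, I would apply Stark's bound: for any real zero $\beta_0$ of $\zeta_\L(s)$ in the Siegel region,
$$
\beta_0 < 1 - \frac{A_2}{\max\{|\Delta|^{1/d},\log|\Delta|\}}
$$
for an absolute constant $A_2>0$. Setting $\beta := \beta_0$ when an exceptional zero exists, and otherwise setting $\beta$ to any value satisfying the stated upper bound (so that $\li(N^\beta)$ is merely a harmless overestimate on the right-hand side), the Siegel contribution in the previous display is bounded by $(\#\cC/\#G)\li(N^\beta)$, and taking absolute values yields~\eqref{eq:Cheb}.

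The only genuine obstacle is that the two inputs are themselves deep. Lagarias--Odlyzko require a careful effective version of the explicit formula giving the $\sqrt{\log N/d}$ saving uniformly in the degree $d$, and Stark requires the nontrivial argument (after Siegel) that a real zero close to $1$ of a Dedekind zeta function must be isolated in terms of $|\Delta|^{1/d}$. Once these are granted, the remaining work is purely bookkeeping: checking that~\eqref{eq:necessarybound} dominates all lower-order terms implicit in~\cite{LO}, and that the Siegel contribution really fits inside the claimed $\li(N^\beta)$ term for the stated range of $\beta$. Both verifications are routine and are already packaged in~\cite[Lemma~6]{PomShp}, which I would cite as the final combined statement.
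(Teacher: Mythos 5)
Your proposal matches the paper's treatment: the paper does not give an independent proof of this lemma, but simply cites the Lagarias--Odlyzko effective Chebotarev theorem together with Stark's bound on a possible Siegel zero and points to~\cite[Lemma~6]{PomShp} for the packaged statement, exactly the route you describe. No discrepancy to report.
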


\subsection{Some preliminaries on Kummer extensions}
\label{sec:some-kumm-extens}

Let $q$ be prime.  We note that
$$
\{  p \leq N : p \equiv 1 \pmod q, \text{ $n/q$ is a $q$-th power
  modulo $p$} \}
$$
is, apart from the $O(\log(qn))$ ramified primes all dividing $qn$, equal to
the set of primes $p \leq N$ such that $p$ splits completely in the
Kummer extension $\K_{q,n} = \L_{q}(\sqrt[q]{n/q})$, where 
$\L_{q} = \Q(\zeta_{q})$ is
the cyclotomic extension generated by the primitive $q$-th root of
unity $\zeta_{q}=\e^{2\pi i/q}$. Note further that the condition that
$p$ splits completely in $\L_{q}$ is equivalent to $p \equiv 1 \pmod
q$.

The ideas behind our argument can be outlined as follows.
Note that  choosing a prime ideal $P\mid p$ in the ring of integers of
$\L_{q}$ essentially amounts to choosing a nontrivial $q$-th root
of unity in $\fp$.  Moreover, having made such a choice, the action
of the Artin map $\sigma_{P,n} \in \gal(\K_{q,n}/\L_{q})$ (note that
this Galois group is abelian) allows us, via Kummer theory, to
associate with an integer $n$ a canonical element in $\Z/q\Z$;
furthermore, this allows us to make ``compatible'' choices of
elements in $\Z/q\Z$ associated with different integers $n$.

To fix the ideas, let $g$ be a nontrivial $q$-th root modulo $p$. By
Kummer theory, we can then find ``compatible'' integers $x_{0},
x_{1},x_{2}, \ldots, x_{q-1}$ modulo $q$ such that $g^{x_{0}} \in q
\cdot (\fp^{\times})^{q}$, and $g^{x_{n}} \in n \cdot
(\fp^{\times})^{q}$ for $n=1,2,\ldots,q-1$ (where  $(\fp^{\times})^{q}$
is set of $q$-th powers in $\fp^{\times}$ and $ \lambda \cdot
(\fp^{\times})^{q}$ denotes the element-wise multiplication).

Note that knowledge of $x_{k}$ for all {\em prime} $k < q$,
determines $x_{n}$ modulo $q$ for $n$ {\em composite}.  Moreover,
the condition that $n/q$ is not a $q$-th power for all $n \in
[1,q-1]$ is equivalent to $x_{n} \not \equiv x_{0} \pmod q$ for $1
\leq n \leq q-1$.

\subsection{A system of linear forms modulo $q$}
\label{sec:syst-lin-form}

Motivated by the arguments of Section~\ref{sec:some-kumm-extens}, 
we  study a system of certain linear
equations modulo $q$. Let $d = \pi(q-1)$, and given an integer $n \in
[1,q-1]$, define a linear form $\cL_{n} : \fq^{d} \to \fq$ by
$$
\cL_{n}(\vec{v}) = \sum_{i=1}^{d} \alpha_{i,n} v_{i},
$$
where $\vec{v} = (v_1, \ldots, v_d)$ and the coefficients $\{\alpha_{i,n}\}$ are read from the prime factorization 
$$
n = \prod_{i=1}^d p_{i}^{ \alpha_{i,n}}.
$$

Given $x_{0} \in \fq$, we study
$$
N_{q} = \#\{ \vec{v} \in \fq^{d}~:~\cL_{n}(\vec{v}) \neq x_{0} \text{ for all
$ n \in \{1,2,\ldots,q-1\}$} \}.
$$

For $q$ large, it seems reasonable to expect that $N_{q}$ should be
of size $q^{d}/e$ since, for $\vec{v}$ a fixed nonzero vector, the
``probability'' that $\cL_n({\bf v}) \neq 
x_{0}$ for all $n$ {\em if the forms are randomly chosen}, equals
$(1-1/q)^{q-1} \simeq 1/e$. Equivalently, if we define
$$
c(q) = N_{q}/{q^{d}},
$$
we expect that $c(q) = 1/e + o(1)$ as $q\to \infty$. 

While we are not able to prove that $c(q)$ approaches $1/e$ as $q$
becomes large, we prove a weaker upper bound which is sufficient for
our purposes.

\begin{lemma}
\label{lem:cq}
As $q$ tends to infinity, we have
$$
c(q) \leq 1 - \vartheta + o(1),$$
where $\vartheta$ is given by~\eqref{eq:theta}. 
\end{lemma}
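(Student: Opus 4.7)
The plan is to reframe $1 - c(q)$ as a probability over a uniformly random $\vec{v} \in \fq^{d}$, and apply truncated inclusion--exclusion (Bonferroni) to the events $A_n = \{\vec{v} : \cL_n(\vec{v}) = x_0\}$, keeping only the $n$ that are squarefree. The motivation for restricting to squarefree indices is twofold: such $\cL_n$ have $\{0,1\}$-valued coefficient vectors in $\fq^{d}$ which are precisely the indicator vectors of the set of prime divisors of $n$, making linear (in)dependence very easy to track; and the count of squarefree integers up to $q-1$ is asymptotically $q/\zeta(2)$, which is exactly where the factor $1/\zeta(2)$ in the target constant $\vartheta$ comes from.

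First I would dispose of the degenerate case $x_0 = 0$: since $\cL_1 \equiv 0$, the event $A_1$ is all of $\fq^{d}$, so $N_q = 0$ and the bound is immediate. For $x_0 \ne 0$, set
$$
S = \{ n \in [2, q-1] : n \text{ squarefree} \},
$$
for which the classical estimate gives $|S| = q/\zeta(2) + O(\sqrt{q})$. For each $n \in S$ the form $\cL_n$ is nonzero, so $|A_n|/q^{d} = 1/q$. For distinct $n, m \in S$, the coefficient vectors of $\cL_n$ and $\cL_m$ are distinct nonzero vectors in $\{0,1\}^{d}$, hence (as explained below) linearly independent over $\fq$, giving $|A_n \cap A_m|/q^{d} = 1/q^{2}$. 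Bonferroni's inequality then yields
$$
\frac{1}{q^{d}}\left| \bigcup_{n \in S} A_n \right| \;\ge\; \frac{|S|}{q} - \frac{|S|(|S|-1)}{2q^{2}} \;=\; \frac{1}{\zeta(2)} - \frac{1}{2\zeta(2)^{2}} + o(1) \;=\; \vartheta + o(1),
$$
which rearranges to $c(q) \le 1 - \vartheta + o(1)$.

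The one delicate input is the linear independence claim: for distinct $n, m \in S$ one must rule out $\cL_n = c \cdot \cL_m$ for some $c \in \fq^{*}$. Since both coefficient vectors are nonzero and have entries in $\{0,1\} \subset \fq$, any such scalar $c$ would have to send $\{0,1\}$ into $\{0,1\}$, forcing $c = 1$ in $\fq$ and hence equality of the indicator vectors of the prime factor sets, contradicting $n \ne m$ (here we use crucially that all prime divisors of any $n \le q-1$ lie among $p_1, \ldots, p_d$). A final remark on sharpness: a second-moment (Paley--Zygmund) approach would yield only the weaker constant $(1/\zeta(2))/(1 + 1/\zeta(2))$, which is why one must truncate inclusion--exclusion after exactly two terms to recover the sharper value $\vartheta$ claimed in the lemma.
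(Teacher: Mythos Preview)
Your proof is correct and follows essentially the same approach as the paper's: both restrict to squarefree indices $n\in[2,q-1]$, use that any two of the corresponding forms are linearly independent, and truncate inclusion--exclusion (Bonferroni) at the second term to obtain $N_q/q^{d}\le 1-1/\zeta(2)+1/(2\zeta(2)^{2})+o(1)$. Your write-up is in fact slightly more careful than the paper's, since you justify the pairwise independence explicitly via the $\{0,1\}$-coefficient argument and dispose of the degenerate case $x_0=0$ separately.
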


\begin{proof}
For $n>1$, the linear form $\cL_{n}$ is nontrivial and the
equation 
$\cL_{n}(\vec{v}) = x_{0}$ has at least one solution; hence
exactly $q^{d-1}$ solutions. Further, given two {\em square-free}
integers $2 \leq m<n < q$, we note that the corresponding linear
forms $\cL_{n}$ and $\cL_{m}$ are independent. Thus, there are exactly $q^{d-2}$
solutions $\vec{v}$ to
$$
\cL_{n}(\vec{v}) = \cL_{m}(\vec{v}) = x_{0}.
$$
Let $M$ denote the number of square-free
positive integers up to $q$. Thus, we have 
$M=(1/\zeta(2)+o(1))q$ as $q\to\infty$. 

To obtain an upper bound, we discard the condition that
$\cL_{n}(\vec{v}) \ne x_{0}$ for squarefull $n$. 
Then, removing those $\vec{v}$ for which $\cL_{n}(\vec{v}) = x_{0}$
for some square-free $n$, and adding back in $\vec{v}$'s for which 
$\cL_{n}(\vec{v}) = \cL_{m}(\vec{v}) = x_{0}$ for pairs of distinct
square-free $m,n$ (in essence, truncating the inclusion--exclusion 
principle at the third step), 
we find that
$$
N_{q} \leq q^{d} - M q^{d-1} + \binom{M}{2} q^{d-2}
=
q^{d}( 1- 1/\zeta(2) + 1/(2\zeta(2)^{2}) + o(1))
$$
as $q\to\infty$, and the result follows.
\end{proof}

\subsection{Independence of field extensions}
\label{sec:indep ext}

For a prime $q\mid Q$ we consider the algebraic number field 
$$
\K_{q} = \Q(\zeta_q,\sqrt[q]{2}, \sqrt[q]{3}, \sqrt[q]{5}, \ldots,
\sqrt[q]{q});
$$
that is, we adjoin the $q$-th roots of the unity and the $q$-th roots of the primes 
$p\le q$ to $\Q$.

Assume that $Q$ is a product of $k$ distinct primes $q_1,\ldots,q_k$. 
We define
$$
\K_{Q} = \K_{q_{1}} \circ \K_{q_{2}} \circ  \ldots \circ  \K_{q_{k}}
$$
to be the composite field obtained from the fields $\K_{q}$ as $q$ ranges over the
prime divisors of $Q$.

\begin{lemma}
\label{lem:lindisjoint}
Assume that $Q$ is an {\bf odd} integer. Then the field extensions
$\L_{q}(\sqrt[q]{\ell})/\L_{q}$ are linearly disjoint as $(q,\ell)$ ranges
over pairs of primes such that $\ell \leq q$ and $q|Q$.
\end{lemma}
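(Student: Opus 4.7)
The plan is to rephrase the desired disjointness as the single degree equality
$$
[\K_Q : L] = \prod_{q \mid Q} q^{\pi(q)},
$$
where $L := \Q(\zeta_Q)$ is the common cyclotomic base. Because $\L_q \subseteq L$ for every $q \mid Q$, this equality is equivalent to joint linear disjointness over $L$ of all the Kummer extensions $L(\sqrt[q]{\ell})/L$, and it implies the analogous statement over the individual bases $\L_q$.

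The crux of the argument, and where I expect essentially all the difficulty to lie, is a multiplicative independence claim in $L^\ast/(L^\ast)^q$. Fix a prime $q \mid Q$ and, for each prime $\ell \le q$, an integer exponent $a_\ell$; the claim is that if $r := \prod_{\ell \le q} \ell^{a_\ell}$ is a $q$-th power in $L$, then $q \mid a_\ell$ for every $\ell$. To prove it, write $r = \alpha^q$ with $\alpha \in L$ and form the Galois closure $E := \Q(\alpha, \zeta_q)$ of $\Q(\alpha)/\Q$. Since $\zeta_q \in L$, we have $E \subseteq L$, so $E/\Q$ is abelian. Now suppose, toward a contradiction, that $r$ is not a $q$-th power in $\Q$. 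Because $q$ is an odd prime---precisely where the hypothesis that $Q$ is odd enters---Capelli's irreducibility theorem gives that $X^q - r$ is irreducible over $\Q$, so $[\Q(\alpha):\Q] = q$. Combined with $[\Q(\zeta_q):\Q] = q - 1$ and $\gcd(q, q-1) = 1$, this forces $[E:\Q] = q(q-1)$ and $\gal(E/\Q) \cong \Z/q\Z \rtimes (\Z/q\Z)^\ast$, which is non-abelian for $q > 2$. This contradicts $E \subseteq L$, so $r \in (\Q^\ast)^q$, and unique factorization in $\Z$ yields $q \mid a_\ell$ for every $\ell$.

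Once this independence is in hand, Kummer theory (applicable because $L$ contains all needed $q$-th roots of unity) immediately gives $[L(\sqrt[q]{\ell}\colon \ell \le q \text{ prime}) : L] = q^{\pi(q)}$ with Galois group $(\Z/q\Z)^{\pi(q)}$. For two distinct primes $q, q' \mid Q$, the corresponding Kummer composites are Galois extensions of $L$ with coprime degrees and are therefore automatically linearly disjoint over $L$; a straightforward induction on the number of prime divisors of $Q$ then delivers the displayed degree equality. Thus the Capelli-plus-abelianness step is the main obstacle; the Kummer step and the coprime-degree step are both essentially formal.
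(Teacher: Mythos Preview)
Your argument is correct and takes a genuinely different route from the paper's.

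The paper argues in two separate stages over two different base fields. First, for a fixed $q$, it shows by a case analysis using discriminant divisibility that the $\L_q(\sqrt[q]{\ell})/\L_q$ are disjoint: a putative dependence forces a containment such as $\L_q(\sqrt[q]{\ell_s})\subseteq \L_q(\sqrt[q]{\ell_1},\ldots,\sqrt[q]{\ell_{s-1}})$, which is ruled out by comparing ramified primes, with an extra sub-case when $\ell_s=q$. Second, to show the composites $\K_q$ are disjoint as $q$ varies, it analyzes the structure of $\gal(\K_{q_s}/\Q)$ as a semidirect product, locates the $q_s$-Sylow subgroup as the unique maximal normal subgroup, and then again invokes discriminant divisibility.

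You collapse both steps by passing to the common abelian base $L=\Q(\zeta_Q)$. The fixed-$q$ step becomes a single multiplicative-independence statement in $L^\ast/(L^\ast)^q$, proved by the neat observation that if a nontrivial $r\in\Q^\ast\setminus(\Q^\ast)^q$ had a $q$-th root in $L$, then the non-abelian splitting field of $X^q-r$ would embed in the abelian extension $L/\Q$ (here oddness of $Q$ enters via Capelli). The varying-$q$ step is then automatic because the Kummer composites over $L$ have pairwise coprime prime-power degrees. Your approach is shorter and more conceptual; the paper's is more hands-on with ramification. One small point worth spelling out in your write-up is the final descent: from $[\K_Q:L]=\prod_{q\mid Q} q^{\pi(q)}$ and $[L:\Q]=\prod_{q\mid Q}(q-1)$ you get $[\K_Q:\Q]=\prod_{q\mid Q}[\K_q:\Q]$, which forces each $[\K_q:\L_q]=q^{\pi(q)}$ and the linear disjointness of the $\K_q$ over $\Q$---exactly what the application to Chebotarev requires.
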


\begin{proof}
We break the argument in two steps.

First we show that if $q$ is fixed, then $\L_q(\sqrt[q]{\ell})/\L_q$
are linearly disjoint once $\ell$ ranges over primes $\ell\le q$. If this
is  not so, then there exist $s\ge 2$ primes
$\ell_1,\ldots,\ell_s$ such that $\L_q\subsetneq \K$ where
$$
 \K=\L_q(\sqrt[q]{\ell_1},\ldots,\sqrt[q]{\ell_{s-1}})\cap
\L_q(\sqrt[q]{\ell_s}).
$$
Observe that $\K/\Q$ is normal as an intersection of normal
extensions. We show that $\K=\L_q(\sqrt[q]{\ell_s})$. Indeed, if this
is not so, then, by Galois theory, the group
$\gal(\L_q(\sqrt[q]{\ell_s})/\K)$ is a proper nontrivial normal
subgroup of $\gal(\L_q(\sqrt[q]{\ell_s})/\L_q)$, but this last group
has order $q$, a prime number. This shows that
$\K=\L_q(\sqrt[q]{\ell_s})$. So,
\begin{equation}
\label{eq:leftright} 
\L_q(\sqrt[q]{\ell_s})\subseteq
\L_q(\sqrt[q]{\ell_1},\ldots,\sqrt[q]{\ell_{s-1}}).
\end{equation}
The discriminant of the field on the left is divisible only by the
primes $q$ and $\ell_s$, while the discriminant of the field on the
right is divisible by the primes $q$ and $\ell_1,\ldots,\ell_{s-1}$. We get an
immediate contradiction unless $\ell_s=q$. So, it remains to treat the case $\ell_s=q$. If $s=2$, then we get
$$
\L_q(\sqrt[q]{q})\subseteq  \L_q(\sqrt[q]{\ell_1}).
$$
Since both extensions above have the same degree $q(q-1)$ over $\Q$, it follows that the above containment is in fact an equality. This is false because
$\ell_1$ ramifies in the field on the right but not in the field on the left.

Assume now that $s\ge 3$ is minimal such that containment~\eqref{eq:leftright} holds for some prime $q=\ell_s$ and some primes $\ell_1<\cdots<\ell_{s-1}<q$. 
Further, by the minimality of $s$, $\sqrt[q]{q}$ cannot belong to any field of the type $\Q(\zeta_q,\sqrt[q]\ell_{i_1},\ldots,\sqrt[q]{\ell_{i_t}})$ for some proper subset 
$\{i_1,\ldots,i_t\}$ of $\{1,\ldots,s-1\}$. Thus, we get a relation of the type
$$
\sqrt[q]{q}=R_0+R_1\sqrt[q]{\ell_{s-1}}+\cdots+R_{q-1} (\sqrt[q] {\ell_{s-1}})^{q-1},
$$
where $R_i=S_i(\zeta_q,\sqrt[q]{\ell_1},\ldots,\sqrt[q]{\ell_{s-2}})$ for some 
$$
S_i(X_0,X_1,\dots,X_{s-2})\in \Q[X_1,\ldots,X_{s-2}]
$$  
and at least one of 
$R_1,\ldots,R_{q-1}$ is nonzero. 
 Hence,  $\sqrt[q]{\ell_{s-1}}$ is an
algebraic number of degree at most $q-1$ over the normal field
$$\Q(\zeta_q,\sqrt[q]{q},\sqrt[q]{\ell_1},\ldots,\sqrt[q]{\ell_{s-2}}).$$
Since
$\Q(\ell_s^{1/q})$  is in fact of prime degree $q$ over $\Q$, we get that 
$$
\sqrt[q]{\ell_{s-1}}\in \Q(\zeta_q, \sqrt[q]{q},\sqrt[q]{\ell_1},\ldots,\sqrt[q]{\ell_{s-2}}),
$$ 
giving
$$
\Q(\sqrt[q]{\ell_{s-1}})\subseteq \Q(\zeta_q, \sqrt[q]{q},\sqrt[q]{\ell_1},\ldots,\sqrt[q]{\ell_{s-2}}).
$$
However, this last field inclusion is false because the discriminant of the field on the left is divisible by the prime $\ell_{s-1}$, while the discriminant 
of the field on the right is divisible only by primes $\ell_1,\ldots,\ell_{s-2}$ and $q$.

We next show that the fields $\K_q$ are linearly disjoint as $q$
varies over the prime factors of $Q$. Again assume that this is not
so and conclude that there exist $s\ge 2$ prime factors of $Q$
denoted $q_1<\cdots<q_s$ such that
$$ 
\Q\subset \K=\K_{q_1}\cdots \K_{q_{s-1}}\cap \K_{q_s}.
$$

Observe that all prime factors dividing the order of the  Galois group
of $\K_{q_s}/\Q$ divide  $q_s(q_s-1)$, while the Galois group of
$\K_{q_1}\cdots \K_{q_{s-1}}$ has order divisible only by primes
dividing $q_1(q_1-1)\cdots q_{s-1}(q_{s-1}-1)$. Thus, the order of
the Galois group $\gal(\K/\Q)$, as a factor group of
$\gal(\K_{q_s}/\Q)$, can be divisible only by primes dividing
$q_s-1$. 

The subgroup $\gal(\K_{q_s}/\K)$ is normal, so by the above
observation on possible prime divisors of its order, must contain the
$q_s$-Sylow subgroup of $\gal(\K_{q_s}/\Q)$, which is isomorphic to
$(\Z/q_s\Z)^{\pi(q_s)}$. However, the Galois group $\gal(\K_{q_s}/\Q)$
is isomorphic to a semidirect product of $\Z/{(q_s-1)\Z}$ with
$(\Z/q_s)^{\pi(q_s)}$, where the first cyclic group acts diagonally as
the group of automorphisms of $\Z/q_s\Z$.  It is not hard to see that
in the Galois group $\gal(\K_{q_s}/\Q)$, the $q_s$-Sylow subgroup is
maximal normal. This shows, via Galois correspondence between
subgroups and subfields, that $\gal(\K_{q_s}/\K)$ is the $q_s$-Sylow
subgroup, so $\K=\L_{q_s}$ is the cyclotomic field.

In particular, 
$\K$ contains $q_s$-th roots of unity and hence the discriminant
of $\K$ is divisible by $q_s$ --- a contradiction since the
discriminant of $\K_{q_1}\cdots \K_{q_{s-1}}$ is divisible only by
primes up to $q_{s-1}$. 

Altogether, this shows that the field extensions
$\L_{q}(\sqrt[q]{\ell})/\L_{q}$ are indeed linearly disjoint as
$(q,\ell)$ ranges over pairs of primes such that $\ell \leq q$,
thereby concluding the proof.
\end{proof}

\subsection{Estimating the degree and discriminant of $\K_Q$}
\label{sec:deganddisc}

We keep the notations from Section~\ref{sec:indep ext}. Put $d_Q$ and $\Delta_Q$ for the degree and discriminant of $\K_Q$, respectively.

\begin{lemma}
\label{lem:deganddisc}
The bounds
\begin{itemize}
\item[(i)] $d_Q\le \exp(q_k^2)$;
\item[(ii)] $\Delta_Q\le \exp(\exp(2q_k^2))$
\end{itemize}
hold for large enough $k$.
\end{lemma}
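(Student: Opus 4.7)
The plan is to combine the linear disjointness statement from Lemma~\ref{lem:lindisjoint} with routine multiplicativity of degrees and a standard bound on the $p$-adic valuation of the discriminant, then feed in the Prime Number Theorem to simplify the resulting sums.

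\emph{Degree.} Since $Q$ is odd and its prime factors $q_1<\cdots<q_k$ are all $\le q_k$, Lemma~\ref{lem:lindisjoint} shows that, first, the extensions $\L_q(\sqrt[q]{\ell})/\L_q$ are linearly disjoint as $\ell$ ranges over primes $\ell\le q$, and, second, the fields $\K_{q_1},\ldots,\K_{q_k}$ are linearly disjoint over $\Q$. Multiplying degrees across linearly disjoint extensions we obtain
$$
d_Q \;=\; \prod_{i=1}^{k} [\K_{q_i}:\Q] \;=\; \prod_{i=1}^{k}(q_i-1)\,q_i^{\pi(q_i)}.
$$
Taking logarithms and bounding $q_i-1<q_i$ we get
$$
\log d_Q \;\le\; \sum_{q\le q_k}\bigl(1+\pi(q)\bigr)\log q.
$$
By the Prime Number Theorem (or Chebyshev's bounds), $\pi(q)\log q \ll q$ and $\sum_{q\le q_k}1\ll q_k/\log q_k$, so the right-hand side is $\ll q_k^2/\log q_k$, which is $\le q_k^2$ for $k$ large enough. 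This yields~(i).

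\emph{Discriminant.} Each generator $\zeta_q$ or $\sqrt[q]{\ell}$ appearing in $\K_Q$ ramifies only at primes dividing $q\ell$, hence only at primes $\le q_k$. Thus the set of primes ramifying in $\K_Q/\Q$ is contained in $\{p:p\le q_k\}$, a set of cardinality $\pi(q_k)\le q_k$. For any number field $K/\Q$ of degree $n$ and any rational prime $p$ one has the standard bound
$$
v_p(\Delta_K) \;\le\; n\bigl(1+v_p(n)\bigr),
$$
(a consequence of the conductor-discriminant formula together with the bound on differents of wildly ramified extensions; see, e.g., Serre's \emph{Local Fields}). Applying this with $n=d_Q$ and summing,
$$
\log |\Delta_Q| \;=\; \sum_{p\le q_k} v_p(\Delta_Q)\log p \;\le\; d_Q\sum_{p\le q_k}\bigl(1+v_p(d_Q)\bigr)\log p \;\le\; d_Q\bigl(\log q_k\cdot \pi(q_k)+\log d_Q\bigr).
$$
By~(i) and the trivial bounds $\pi(q_k)\le q_k$, $\log d_Q\le q_k^2$, the right-hand side is at most $\exp(q_k^2)\cdot (q_k^2+q_k^2)\le \exp(2q_k^2)$ once $k$ is sufficiently large, proving~(ii).

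\emph{Main obstacle.} The only non-mechanical step is invoking the correct bound on $v_p(\Delta_K)$ for possibly wildly ramified primes $p\le q_k$ dividing $d_Q$; a naive tame-ramification bound would only give $v_p(\Delta_K)\le d_Q-1$ and would still suffice here, since we afford a large slack in the double exponential. Everything else is estimating prime sums by Chebyshev-type inequalities, which is routine.
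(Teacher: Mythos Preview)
Your argument is correct. For~(i) both you and the paper reach the same estimate $\log d_Q\le n\log q_k$ with $n=\sum_i(\pi(q_i)+1)\ll q_k^2/\log q_k$; the only cosmetic difference is that you invoke Lemma~\ref{lem:lindisjoint} to get an exact degree formula, whereas the paper simply bounds the degree of the compositum by the product of the degrees of the $n$ generating fields (so it does not need $Q$ odd --- though in the application $Q$ is odd anyway, and your bound would survive as an inequality even without disjointness).

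For~(ii) the routes genuinely diverge. The paper argues inductively on the tower $\L_{j+1}=\L_j\circ\K_{j+1}$, using only the elementary compositum bound $\Delta_{\L_{j+1}}\le\Delta_{\L_j}^{[\K_{j+1}:\Q]}\Delta_{\K_{j+1}}^{[\L_j:\Q]}$ together with $\Delta_{\K_{j+1}}\le q_k^{2q_k}$; unwinding the recursion gives $\Delta_Q\le q_k^{2nq_k^n}$. You instead bound each local contribution directly via Serre's estimate $v_p(\Delta_K)\le n(1+v_p(n))$, then sum over the ramified primes $p\le q_k$. Your approach is shorter and conceptually cleaner, at the cost of quoting a sharper ramification-theoretic input; the paper's approach is more self-contained, needing nothing beyond multiplicativity of the relative discriminant in towers. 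As you yourself note, the double-exponential target is so loose that either method (and even the crude bound $v_p(\Delta_Q)\le d_Q$, once one knows only primes $p\le q_k$ ramify) lands comfortably inside $\exp(\exp(2q_k^2))$.
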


\begin{proof}
It is clear that $\K_Q$ is the compositum of 
\begin{equation}
\label{eq:boundforn}
n=(\pi(q_1)+1)+(\pi(q_2)+1)+\cdots+(\pi(q_k)+1)<\frac{q_k^2}{\log q_k}
\end{equation}
fields $\K_{i,j}=\Q(r_i^{1/q_j})$, where $r_i\in \{1\}\cup \{p\le q_j\}$ and $j=1,\ldots,k$, each of degree at most $q_k$. The inequality~\eqref{eq:boundforn} 
above holds for large $k$. Thus, (i) follows. For (ii), observe that 
the discriminant of each of $\K_{i,j}$ is at most $q_k^{2q_k}$. Label these fields in some way as $\K_1,\ldots,\K_n$ and let $\L_j=\K_1\circ \K_2\circ \cdots\circ \K_j$ for 
$j=1,\ldots,n$. Note that $\L_{j+1}=\L_j\circ \K_{j+1}$, therefore
$$
\Delta_{\L_{j+1}}\le \Delta_{\L_j}^{[\K_{j+1}:\Q]} \cdot \Delta_{\K_{j+1}}^{[\L_{j}:\Q]}.
$$
Since $[\K_{j+1}:\Q]\le q_k$, $[\L_j:\Q]\le q_k^j$ and $\Delta_{\K_j}\le q_k^{2q_k}$, we conclude that if we put $\lambda_j$ for some constant such that $\Delta_{\L_j}\le q_k^{\lambda_j q_k^j}$, then the inequalities
$$
\lambda_1\le 2\quad {\text{and}}\quad \lambda_{j+1}\le \lambda_j+2
$$
hold for $j=1,\ldots,n-1$. Hence, $\lambda_j\le 2j$ for $j=1,\ldots,n$. With $j=n$, we obtain
$$
\Delta_Q\le q_k^{2n q_k^{n}}<q_k^{2q_k^{q_k^2+2}}<\exp(\exp(2q_k^2))
$$
for all large $k$, thus proving (ii).  
\end{proof}

\subsection{Some technical estimates}
\label{sec:estimates}

For a square-free integer $S$, we define
$$
c(S) = \prod_{q\mid S} c(q).
$$
For  positive integers $L$ and $R$ with $Q = LR$, define
$$
\cP_{L,R}(N) = \#\{ p \leq N~:~\gcd(p-1,Q) = L \},
$$
and
$$
\widetilde{\cP}_{L,R} (N) = \{ p \in P_{L,R}(N)~:~n/q \not \in (\fp^{\times})^{q}
\text{ for all $q\mid L$ and $0<n<q$} \}.
$$
\begin{lemma} 
\label{lem:PP}
If 
\begin{equation}
\label{eq:ass}
6q_k^2<\log_2 N,
\end{equation}
then 
\begin{equation}
\label{eq:PI}
\widetilde{P}_{L,R}(N)   \ll    \pi(N) \cdot
\frac{c(L)}{\varphi(L)} \cdot \prod_{q\mid R}\left(\frac{q-2}{q-1}\right).
\end{equation}
wh
\end{lemma}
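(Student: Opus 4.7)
The strategy is to realize $\widetilde{\cP}_{L,R}(N)$ as a Chebotarev count inside the field $\K_Q$ of Section~\ref{sec:indep ext} and invoke Lemma~\ref{lem:cheb}, using Lemma~\ref{lem:deganddisc} and the hypothesis~\eqref{eq:ass} to bound the degree and discriminant. The key translation, already foreshadowed in Section~\ref{sec:some-kumm-extens}, is that for a prime $p$ unramified in $\K_Q$ with Frobenius $\sigma_p$, the condition $q \mid p-1$ is equivalent to $\sigma_p|_{\L_q} = \mathrm{id}$, and, given this, the condition $n/q \in (\F_p^\times)^q$ is equivalent to $\sigma_p$ fixing $\sqrt[q]{n/q}$. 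Parametrising $\sigma_p|_{\K_q/\L_q}$ by Kummer coordinates $(\vec{v}, x_0) \in \F_q^d \times \F_q$, where $v_i$ records the action on $\sqrt[q]{p_i}$ for the $i$-th prime $p_i < q$ and $x_0$ the action on $\sqrt[q]{q}$, the failure of the $q$-th power condition for every $n \in [1, q-1]$ becomes exactly $\cL_n(\vec{v}) \neq x_0$, in the notation of Section~\ref{sec:syst-lin-form}.

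Let $\cC \subseteq G := \gal(\K_Q/\Q)$ be the set of $\sigma$ such that (a) for every $q \mid L$, $\sigma|_{\L_q}=\mathrm{id}$ and the Kummer coordinates $(\vec{v}, x_0)$ of $\sigma|_{\K_q}$ satisfy $\cL_n(\vec{v}) \ne x_0$ for all $n \in [1,q-1]$, and (b) for every $q \mid R$, $\sigma|_{\L_q}$ is nontrivial. This set is a union of conjugacy classes since conjugation in $\gal(\K_q/\Q) \cong \F_q^{d+1} \rtimes (\Z/q\Z)^\times$ acts on the Kummer part by diagonal scaling, which preserves the hyperplane-avoidance condition. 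By Lemma~\ref{lem:lindisjoint}, $G \cong \prod_{q\mid Q} \gal(\K_q/\Q)$, so the density factors: each $q \mid R$ contributes $(q-2)/(q-1)$, while each $q \mid L$ contributes at most $c(q)/(q-1)$ (multiplying the density $1/(q-1)$ of $\sigma|_{\L_q}=\mathrm{id}$ by the density $\sum_{x_0 \in \F_q} N_q(x_0)/q^{d+1} \le c(q)$ of admissible Kummer coordinates, via Lemma~\ref{lem:cq}). Multiplying yields
\[
\frac{\#\cC}{\#G} \le \frac{c(L)}{\varphi(L)} \prod_{q\mid R} \frac{q-2}{q-1}.
\]

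To apply Lemma~\ref{lem:cheb}, I would verify~\eqref{eq:necessarybound}: Lemma~\ref{lem:deganddisc} and the hypothesis $6q_k^2 < \log_2 N$ give $10 d_Q (\log|\Delta_Q|)^2 \le 10\exp(5q_k^2) < \exp(6q_k^2) < \log N$. Since $\widetilde{\cP}_{L,R}(N)$ equals $\pi_\cC(N, \K_Q/\Q)$ up to the $O(\log Q)$ ramified primes, the main term in~\eqref{eq:Cheb} produces~\eqref{eq:PI}. The error terms in~\eqref{eq:Cheb} are negligible: since $\log|\Delta_Q| \le (\log N)^{1/3}$, one gets $1-\beta \gg (\log N)^{-1/3}$ (invoking a standard root-discriminant bound for composita of Kummer extensions ramified at small primes), whence $\li(N^\beta) = o(\li(N))$; and since $d_Q \le (\log N)^{1/6}$, the exponential factor $\exp(-A_1\sqrt{\log N/d_Q}) \le \exp(-A_1(\log N)^{5/12})$ easily absorbs $\|\cC\| \le \#G \le (\log N)^{O(1)}$, beating the main term super-polynomially.

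The main technical obstacle is making the Chebotarev error truly negligible relative to the main term, since the discriminant bound from Lemma~\ref{lem:deganddisc} is double-exponential in $q_k$; this is precisely what forces the stringent hypothesis $6q_k^2 < \log_2 N$ and restricts how large $q_k$ can be taken in applications. A secondary subtlety is verifying that the Kummer avoidance condition defining $\cC$ is conjugation-invariant, which hinges on the scaling action of $(\Z/q\Z)^\times$ preserving the system of forbidden hyperplanes $\{\cL_n = x_0\}$ in $\F_q^{d+1}$.
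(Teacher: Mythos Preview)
Your proposal is correct and follows essentially the same route as the paper: identify $\widetilde P_{L,R}(N)$ with a Chebotarev count in $\K_Q$, use Lemma~\ref{lem:lindisjoint} to factor the density as $\prod_{q\mid L} c(q)/(q-1)\cdot \prod_{q\mid R}(q-2)/(q-1)$, and invoke Lemmas~\ref{lem:cheb} and~\ref{lem:deganddisc} under the hypothesis~\eqref{eq:ass} to control the error.

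The one place where you work harder than necessary is the Siegel-zero term $\frac{\#\cC}{\#G}\li(N^\beta)$. You try to force $1-\beta \gg (\log N)^{-1/3}$ via a root-discriminant bound that you only sketch; the paper instead observes that since $\beta<1$ we have $\li(N^\beta)\le \li(N)$, and because~\eqref{eq:PI} is only an \emph{upper} bound, this term is at most comparable to the main term and can simply be absorbed into the implied constant. That shortcut removes any need to control $|\Delta_Q|^{1/d_Q}$.
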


\begin{proof}
This follows from  the Chebotarev density theorem. 
 More precisely, a prime
$p$ counted by $\widetilde{P}_{L,R}(N)$ has the following
property: $p \equiv 1 \pmod q$ for each prime $q\mid L$   and for
all $1 \leq n < q$, $n/q$ is not a $q$-th power in $\fp^\times$.  In terms of the image of the Frobenius map, the relative size
of the corresponding conjugacy classes in $\gal(\K_q/\Q)$, is given by $c(q)$ (see Section~\ref{sec:syst-lin-form}).  Since by Lemma~\ref{lem:lindisjoint} the
field extensions $\K_{q_i}$ are linearly disjoint for $i=1,\ldots,k$, 
the relative size inside  $\gal(\K_L/\Q)$
is given by 
$c(L)$. This takes care of the main term. For the error term, 
we appeal to Lemmas~\ref{lem:cheb} and~\ref{lem:deganddisc}. 
More precisely, by Lemma~\ref{lem:deganddisc}, we have
$$
10d_Q (\log \Delta_Q)^2<10\exp(5q_k^2))<\log N
$$
for large $k$ by the assumption~\eqref{eq:ass},  so 
the inequality~\eqref{eq:necessarybound} holds. As for error terms, we
have 
$$
d_Q\le \exp(q_k^2)<(\log N)^{1/6}
$$
so the second error term in~\eqref{eq:Cheb} is negligible with respect to the main term. Finally, we note that the first error term in~\eqref{eq:Cheb}
is at most comparable with the main term and it could be incorporated into 
it given that~\eqref{eq:PI} is only an upper bound estimate.
\end{proof}

We now set
\begin{equation}
\label{eq:Qt}
Q_t = \prod_{t < q \leq e^{t}} q.  
\end{equation}
Thus, $Q$ has $k = \pi(e^t) - \pi(t)$ prime factors labeled $q_1,\ldots,q_k$. 
The inequality~\eqref{eq:ass} is satisfied for this choice of $Q$ provided that 
$N$ is large and 
\begin{equation}
\label{eq:choiceoft}
t=\frac{1}{3} \log_3 N.
\end{equation}
We get the following result.

\begin{lemma}
\label{lem:Sum PLRN}
If $N$ is large and~\eqref{eq:choiceoft} holds, then
$$
P_{1,Q_t}(N) \ll \frac{\pi(N)\log_ 4 N}{\log_3 N}. 
$$
\end{lemma}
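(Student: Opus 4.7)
The plan is to apply Lemma~\ref{lem:PP} directly with $L = 1$ and $R = Q_t$, and then estimate the resulting product over primes using Mertens' theorem. With $L = 1$, the condition that $n/q \notin (\F_p^{\times})^q$ for all $q \mid L$ is vacuous, so the set $\widetilde{P}_{1,Q_t}(N)$ coincides with $P_{1,Q_t}(N)$, and Lemma~\ref{lem:PP} yields an upper bound on the latter directly.

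Before invoking Lemma~\ref{lem:PP}, I would verify its hypothesis~\eqref{eq:ass}. The largest prime factor $q_k$ of $Q_t$ satisfies $q_k \le e^t$, and with the choice $t = \tfrac{1}{3}\log_3 N$ we get $q_k \le e^t = (\log_2 N)^{1/3}$, so $6 q_k^2 \le 6(\log_2 N)^{2/3} < \log_2 N$ for all sufficiently large $N$. Therefore Lemma~\ref{lem:PP} applies and, since $c(1) = 1$ and $\varphi(1) = 1$, it delivers
$$
P_{1,Q_t}(N) \ll \pi(N) \prod_{t < q \le e^t} \frac{q-2}{q-1}.
$$

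The remaining step is to evaluate the product. Writing $(q-2)/(q-1) = 1 - 1/(q-1)$, taking logarithms and using $\log(1-u) = -u + O(u^2)$, I would reduce this to estimating $\sum_{t < q \le e^t} 1/q$. By Mertens' theorem,
$$
\sum_{t < q \le e^t} \frac{1}{q} = \log\log(e^t) - \log\log t + o(1) = \log t - \log_2 t + o(1),
$$
so
$$
\prod_{t < q \le e^t} \frac{q-2}{q-1} \ll \frac{\log t}{t}.
$$
Inserting $t = \tfrac{1}{3}\log_3 N$ gives $1/t = 3/\log_3 N$ and $\log t = \log_4 N + O(1)$, hence
$$
\frac{\log t}{t} \ll \frac{\log_4 N}{\log_3 N},
$$
which yields the claimed bound.

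There is no real obstacle here: Lemma~\ref{lem:PP} does the heavy lifting, and the remaining work is routine bookkeeping with iterated logarithms and an application of Mertens' formula. The only mild subtlety is checking that the hypothesis~\eqref{eq:ass} is comfortably satisfied for the specific choice~\eqref{eq:choiceoft} of $t$, which is precisely why $t$ is taken to be a third of $\log_3 N$ rather than, say, $\log_3 N$ itself; this leaves enough room for $q_k$ to stay well below the $(\log_2 N)^{1/2}$ threshold required by the Chebotarev input.
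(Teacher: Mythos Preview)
Your proof is correct, but it takes a slightly different route from the paper. The paper obtains the intermediate bound
\[
P_{1,Q_t}(N) \ll \pi(N) \prod_{q\mid Q_t}\frac{q-2}{q-1}
\]
by a direct application of the Brun sieve to the set $\{p\le N:\gcd(p-1,Q_t)=1\}$, and then invokes Mertens' formula exactly as you do. You instead recover the same product bound by specialising Lemma~\ref{lem:PP} to $L=1$, $R=Q_t$, noting that the extra condition defining $\widetilde{P}_{L,R}$ is vacuous when $L=1$. Your approach is more uniform with the surrounding argument (it reuses machinery already in place) but relies on the effective Chebotarev input behind Lemma~\ref{lem:PP}; the paper's Brun-sieve argument is more elementary and shows that this particular estimate does not actually require any Chebotarev-type information. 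Either way, the Mertens step and the final substitution $t=\tfrac{1}{3}\log_3 N$ are identical.
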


\begin{proof} By the Brun sieve~\cite[Theorem~3, Section~I.4.2]{Ten}, and 
on recalling  Mertens formula~\cite[Section~I.1.5]{Ten}, we have
\begin{equation*}
\# \{ p \leq N~:~\gcd(p-1,Q_t)=1 \} \ll \pi(N)  \prod_{q\mid Q_t}\left(\frac{q-2}{q-1}\right)\ll \frac{\pi(N)\log t}{t},
\end{equation*}
and the result now follows from~\eqref{eq:choiceoft}. 
\end{proof}

\subsection{Concluding the proof}
\label{sec:concl}

We assume that $Q_t$ is  given by~\eqref{eq:Qt} 
where $t$ is given by~\eqref{eq:choiceoft}. In particular, the conditions of Lemmas~\ref{lem:PP}
and~\ref{lem:Sum PLRN} are satisfied. 

By Lemma~\ref{lem:PP}, we have
\begin{equation}
\label{eq:sum PLRN}
\sum_{LR = Q_t, L>1}
\widetilde{P}_{L,R}(N)
\ll \pi(N)
\sum_{LR = Q_t, L>1}
\frac{c(L)}{\varphi(L)} \cdot \prod_{q\mid R}\left(\frac{q-2}{q-1}\right).
\end{equation}
Furthermore, 
\begin{equation*}
\begin{split}
\sum_{\substack{LR = Q_t\\ L>1}}
\frac{c(L)}{\varphi(L)}  &\cdot \prod_{q\mid R}\(\frac{q-2}{q-1}  \)
= \prod_{q\mid Q_t}\left(\frac{q-2}{q-1}\right)  \sum_{\substack{LR = Q_t\\ L>1}}
\frac{c(L)}{\varphi(L)} \cdot\prod_{q\mid L}\frac{q-1}{q-2} \\
&= \prod_{q\mid Q_t}\left(\frac{q-2}{q-1} \right)\sum_{\substack{LR = Q_t\\ L>1}} c(L)  \cdot
\prod_{q\mid L}\frac{1}{q-2} \\
&\leq\prod_{q\mid Q_t}\left(\frac{q-2}{q-1} \right)
\sum_{L\mid Q_t}\prod_{q\mid L}\left(\frac{c(q)}{q-2}\right)\\
&=\prod_{q\mid Q_t}\left(\frac{q-2}{q-1} \right)
\prod_{q\mid Q_t}
\(1+\frac{c(q)}{q-2}\)=
\prod_{q\mid Q_t}
\(1-\frac{1-c(q)}{q-1}\).
\end{split}
\end{equation*}
Thus, recalling~\eqref{eq:sum PLRN}, we obtain
$$
\sum_{LR = Q_t, L>1}
\widetilde{P}_{L,R}(N)\ll 
\pi(N) \prod_{q\mid Q_t}
\(1-\frac{1-c(q)}{q-1}\).
$$

Using Lemma~\ref{lem:cq}  and then the Mertens formula again, we obtain
\begin{equation*}
\begin{split}
\prod_{q\mid Q_t}
\(1-\frac{1-c(q)}{q-1}\)& \ll  \exp \(- \sum_{q\mid Q_t} \frac{1-c(q)}{q}\) \\
& \ll   \exp \(- 
(\vartheta + o(1))\sum_{q\mid Q_t} \frac{1}{q}\)\\
& =   \exp \(- (\vartheta + o(1)) \log t\)=\frac{1}{(\log_3 N)^{\vartheta + o(1)}},
\end{split}
\end{equation*}
and so
$$
\sum_{LR = Q_t, L>1}
\widetilde{P}_{L,R}(N)\le 
\frac{\pi(N)}{(\log_3 N)^{\vartheta + o(1)}} 
$$
as $N\to\infty$.
With Lemma~\ref{lem:Sum PLRN}, we finally get that
\begin{equation*}
\begin{split}
\#\cA(N)& \ll P_{1,Q_t}(N)+\sum_{LR = Q_t, L>1}
\widetilde{P}_{L,R}(N)\\
& \ll \pi(N)\(\frac{\log_4 N}{\log_3 N}+\frac{1}{(\log_3 N)^{\vartheta + o(1)}}\),
\end{split}
\end{equation*}
as $N\to\infty$,  which finishes the proof.

\section{Further Remarks on  $\#\cA(N)$ }
\label{sec:A heurist}

\subsection{Heuristic arguments}
\label{sec:F=1 heurist}

Recall that $x=1$ is always a trivial fixed point, and note that
$x=p-1$ is never a fixed point.  Hence, we only  consider $x$
whose multiplicative order is greater than two, and the exponent
$x-1$ ranging over integers in the interval $[1,p-3]$.

If $d|p-1$ and $x$ is a primitive $d$-th root of unity {\em and} we
make the 
assumption that the exponent $x-1$ is ``independent'' of $x$, the
``chance'' that $x^{x-1} \equiv 1 \pmod p$ equals the chance that
$d|x-1$; this occurs with probability 
\begin{equation}
  \label{eq:d-power-probability}
\frac{\fl{(p-3)/d} }{p-3}= \frac{(p-1)/d-1}{p-3} = 1/d + O(1/p).
\end{equation}

Letting $x$ range over the set of $\varphi(d)$ primitive $d$-th roots of
unity, the probability that $x^{x-1} \not \equiv 1 \pmod p$ for all of
them, assuming independence, equals
$
\left(1- \frac{\fl{(p-3)/d} }{p-3} \right)^{\phi(d)}.
$
Moreover, with the further assumption of independence when $d$ ranges
over divisors of $p-1$, this suggests that 
$$
\#\cA(N) = (1+ o(1))  H(N)
$$
as $N\to\infty$, where 
\begin{equation}
\label{eq:HN}
H(N) = \sum_{p < N}
\prod_{\substack{d|p-1 \\ 2<d<p-1}} \(1-\frac{\fl{(p-3)/d} }{p-3}\)^{\varphi(d)}.
\end{equation}

For $p$ fixed (but large)  we  similarly find that the heuristic
probability of 
the map $\psi_p$ having no (nontrivial) fixed points, using that 
$$
\(1-\(\frac{1}{d}+O(p^{-1})\)\)^{\varphi(d)} = \exp
\(  \varphi(d) \ln\(1 - \(\frac{1}{d}+O(p^{-1})\)\)\).
$$
is given by 
$\exp(-\Delta_p)$, where 
\begin{equation}
\label{eq:Delta-p}
\begin{split}
\Delta_p&:=-\sum_{\substack{
d|p-1 \\ 2 <d < p-1           }}
\varphi(d)  \ln\(1 - \(\frac{1}{d}+O(p^{-1})\)\)\\
& =  \sum_{\substack{
d|p-1 \\ 2 <d < p-1}}
\varphi(d) \( \frac{1}{d} +\frac{1}{2d^{2}} + O(p^{-1}+d^{-3}) \) \\
&=  \sum_{d|p-1}
\varphi(d)  \( \frac{1}{d} +\frac{1}{2d^{2}} + O(p^{-1}+d^{-3}) \) +O(1)  \\
&=  \tau(p-1) \cdot \prod_{q^{e}||p-1}
\left(1-\frac{e}{(1+e)q} \right)   + O\(
\sum_{d|p-1} 1/d\).
\end{split}
\end{equation}

Hence, $\psi_p$ is exceeding likely to have a nontrivial fixed point
unless $p-1$ have rather few prime factors.
Restricting to $p$ such that $p-1$ is square-free, and, motivated by
the  results of
Sathe~\cite{Sathe}
and Selberg~\cite{Sel}, assuming that  for any fixed $\varepsilon>0$ 
and $k \leq (2-\varepsilon) \log_2 N$, 
we have
$$
\#\{p \le N~:~\omega(p-1) = k\}
\sim \frac{N (\log_2 N)^{k-1}}{(k-1)! \log^2 N}
$$
we  expect that the number of $p \leq x$ such that $\psi_p$ has
no nontrivial fixed point modulo $p$ is, for any integer $k>0$,
is 
$$
 H(N)  \gg 
\sum_{p \le N} \exp(-\Delta_p) \ge \sum_{1 \le k \le  (2-\varepsilon) \log_2 N}
 \frac{N (\log_2 N)^{k-1} \exp(-2^{k+o(k)})  }{(k-1)!  \log^2 N}.
$$

Using the trivial estimate $1 \le  (k-1)! \le k^k$ we see that  $(k-1)!$
can be absorbed in $2^{k+o(k)}$ in the exponent. Furthermore, for any positive 
integer $k \le  (2-\varepsilon) \log_2 N$ we have
$$
H(N) \gg \frac{N \exp\(k\log_3 N-2^{k+o(k)}\)}{(\log N)^2 \log_2 N}.
$$
Thus, taking
$$
k=\fl{\(\frac{1}{\ln 2} - \eta\)\log_4 N},
$$ 
for an arbitrary $\eta > 0$ gives the bound 
$$
H(N)\ge \frac{N}{(\log N)^2} \exp\((1/\ln 2- \eta +o(1))\log_3 N \log_4 N\)
$$
(note that using other admissible values of $k$ does not significantly
improve this bound; just one optimally chosen value suffices.)
Since $\eta > 0$ is arbitrary,  we obtain the expected 
lower bound~\eqref{eq:AN LB}.

In fact we believe that the lower bound~\eqref{eq:AN LB} is close to
the actual order of magnitude  of both  $\#\cA(N)$ and $H(N)$.

The above argument, in particular \eqref{eq:d-power-probability},
also suggests that the expected value 
of the total number of nontrivial fixed points over all primes $p \le N$
is 
$$
\sum_{p \le N} F(p) = (1+ o(1))  K(N)
$$
where 
\begin{equation}
\label{eq:KN}
K(N) = 
\sum_{p \le N} \sum_{\substack{d \mid p-1\\ d > 2}}\frac{\varphi(d)}{d}
=  \sum_{d = 3}^N\frac{\varphi(d)}{d} \sum_{\substack{p \le N \\ p \equiv 1 \pmod d}} 1 .
\end{equation}
Using the approximation
$$
 \sum_{\substack{p \le N \\ p \equiv 1 \pmod d}} 1 = (1+o(1)) \frac{N}{\varphi(d) \log N},
$$
it seems reasonable to expect that 
$$
K(N) = (1+ o(1)) N.
$$

\subsection{Numerical results}
\label{sec:numerics}

In Table~\ref{tab:F=1} we compare the observed data for all primes $p
\leq N$ for $N= 100000 \cdot k$, $1 \leq k \leq 10$, 
that have no nontrivial fixed point with the heuristically 
predicted value $H(N)$ given by~\eqref{eq:HN}.

\begin{table}[h]
  \centering
  \begin{tabular}{|r|r|r|r|}
\hline
$N$ & Observed & Predicted & Relative error\\
\hline
100000 & 567 & 585.6 & -0.0318\\
200000 & 1007 & 1020.6 & -0.0134\\
300000 & 1358 & 1421.4 & -0.0446 \\
400000 & 1715 & 1790.1 & -0.0419 \\
500000 & 2068 & 2151.8 & -0.0389 \\
600000 & 2404 & 2490.0 & -0.0345 \\
700000 & 2725 & 2826.7 & -0.0360 \\
800000 & 3053 & 3151.0 & -0.0311 \\
900000 & 3350 & 3479.5 & -0.0372 \\
1000000 & 3632 & 3796.2 & -0.0433 \\
\hline
\end{tabular}
\caption{Number of primes $p \leq N$ with no nontrivial fixed point}
  \label{tab:F=1}
\end{table}

In Table~\ref{tab:Av F} we present data for the total number of fixed
points for all primes $p \leq N$ for $N= 50000 \cdot k$, $1 \leq k
\leq 9$, that have no nontrivial fixed point, and compare it with with
the heuristically predicted value given by~\eqref{eq:KN}.

\begin{table}[h]
  \centering
  \begin{tabular}{|r|r|r|r|}
\hline
$N$ & Observed & Predicted & Relative error \\
\hline
500000 & 465413 & 410686.1 & 0.1333\\
1000000 & 936280 & 831872.7 & 0.1255\\
1500000 & 1408964 & 1256499.5 & 0.1213\\
2000000 & 1883411 & 1683081.9 & 0.1190\\
2500000 & 2357781 & 2110954.9 & 0.1169\\
3000000 & 2832933 & 2539862.9 & 0.1154\\
3500000 & 3306597 & 2968852.5 & 0.1138\\
4000000 & 3780495 & 3398836.9 & 0.1123\\
4500000 & 4256757 & 3829903.3 & 0.1115\\
\hline
\end{tabular}
\caption{Total number of observed nontrivial fixed points for $p \leq N$
  vs. random model prediction.} 
    \label{tab:Av F}
\end{table}

When comparing predicted and observed values we note that there seems
to be a consistent negative bias in 
Table~\ref{tab:F=1} and a consistent positive bias in
Table~\ref{tab:Av F}.  As
of now, we have no satisfactory explanation of this phenomenon.

\section{Remarks on the Dynamics of the Map $\psi_p$}

\subsection{Orbit length model}

Given a finite set $X$, a map $\eta:  \ X \to X$, and a starting point
$x_{0}$, define $x_{n+1} = \eta(x_n)$ for $n \in \Z^+$.  Let 
$O_{\eta,x_{0}}(X) = \{x_{0}, x_{1}, \ldots \}$ denote the forward orbit
of $x_{0}$ under $\eta$.  Clearly, we have 
the trivial inequality $\#O_{\eta,x_{0}}(X)  \leq \#X$, but if $\eta$ is a
{\em random map} (that is, for each $x\in X$, we define its image
  $\eta(x)$ by uniformly selecting a random element of $X$), a simple
`birthday paradox' argument shows that $\#O_{\eta,x_{0}}(X) $ is very
likely to be of 
size roughly $(\# X)^{1/2}$; in particular, as $\#X\to \infty$,
$\#O_{\eta,x_{0}}(X)  \leq 
(\#X)^{1/2+o(1)}$ holds with probability one.

Thus, if we na\"{\i}vely model $\psi_p$ as a random map, then,  as $p
\to \infty$, and selecting a random starting point $x_{0}$, the orbit size
$\# O_{\psi_p,x_{0}}(\F_p)$ is expected to be roughly of size $\sqrt{p}$,
see~\cite{FlOdl}.  
However, numerics indicate that $\# O_{\psi_p,x_{0}}(\F_p)$ often is
{\em much smaller} than $\sqrt{p}$.  In fact, in what follows, we  give
numerical evidence, and an heuristic model, that the probability
density distribution of $\log \# O_{\psi_p,x_{0}}(\F_p)/\log p$ has
support in $[0,1/2]$.

In fact, it is easy to see that  the orbit 
$ O_{\psi_p,x_{0}}(\F_p)$ are shorter than expected
from a random map as once  a certain element $x\in
O_{\psi_p,x_{0}}(\F_p)$ lies in a  
multiplicative subgroup of $\F_p^*$, then so does $\psi_p(x)$, and the
remaining part of  
the orbit never leaves this subgroup. 
 So, the behavior of orbits of $\psi_p$, originating at a point  $x_{0} \in \F_p^*$  is ruled by two (apparently independent)
factors:
\begin{itemize}
\item random map-like behavior inside of a subgroup of $\F_p^*$ which 
eventually leads to a  cycle formed by the `birthday paradox' (see~\cite{FlOdl}
for an exhaustive treatise of the structure of random maps);

\item reducing the size of the multiplicative  subgroup where the iterations of $\psi_p$ 
 get locked in as they progress along the trajectory. 
 \end{itemize}
 
For example, if the initial point $x_0$ is not a primitive root of $\F_p$, 
 this immediately puts all elements of the corresponding trajectory in 
 a nontrivial multiplicative  subgroup of $\F_p^*$.

Hence, we believe that the main reason for such small orbit lengths is that a
correct model for $\psi_{p}$ is that of a {\em random automorphism} on
$C_{p-1}$, the cyclic group of cardinality $p-1$.  Since $\psi_p$ maps
$\fp^{\times}$ into itself, and, as groups $\fp^{\times} \simeq
C_{p-1}$, we may translate the dynamics $x_{0}\to x_{1}\to \cdots$ on
$\fp^{\times}$ to dynamics $y_{0} \to y_{1} \to \cdots $ on $C_{p-1}$.
Under the assumption that the discrete log map (which identifies
$\fp^{\times}$ with $C_{p-1}$) behaves randomly, the image
 of $\psi_p$ as a map of $C_{p-1}$ be viewed as
``random'' map $\varphi:\ C_{p-1}\to C_{p-1}$ given by 
$$
\varphi(y) \equiv  \alpha_{y}  y \pmod{p-1},
$$ where $\alpha_y \in \Z/(p-1)\Z$ is selected
randomly.  In particular, once an iterate $y_{n}$ ``lands'' in a
subgroup $H \subset C_{p-1}$, it never ``leaves''; and this makes 
much shorter orbit lengths likely.  

For example, for primes $p$ such that $p-1 = s \cdot t$, where $s$ is the
$p^{1/3}$-smooth part of $p-1$, and $s \gg p^{1/3}$, we find that it
is very likely that the $s$-part of the orbit gets annihilated after at
most $p^{1/3+\varepsilon}$ steps (write $C_{p-1} \simeq C_{s} \times
C_{t}$ and say that the $s$-part of $y_{n}$ is annihilated if the image of
$y_{n}$ in $C_{s} \times C_{t}$ is of the form $(0,*)$.)  In fact, if
a prime $q$ divides $p-1$, it is easy to see that the probability of
the $q$-part not being annihilated after $k$ steps is given by
$(1-1/q)^{k}$, which, if $ q/k = o(1)$, is $o(1)$ as $q\to\infty$.

This leads to the following
natural question. Let $\Psi_{d,p}$ be the endomorphisms 
of $\F_p^*$, (indexed by the divisors $d \in [1, p-1]$) and generated
by the map $x\mapsto x^d$, $x \in \F_p^*$.

\begin{ques}
 \label{ques:Endom} 
Let  $x_{0} \in \F_p^*$ be chosen uniformly at random and let 
 $\Psi_{d_1,p}, \ldots, \Psi_{d_L,p}$  be a 
sequence of $L$ random endomorphisms such 
that for every $j=1, \ldots, L$ and $d\mid p-1$ we have 
$$
\Pr[(p-1,d_j) = d] = \frac{\varphi((p-1)/d)}{p-1},
$$
What is the expected size of the smallest subgroup of $\F_p^*$ 
that contains the element $\Psi_{d_L,p}\( \ldots \(\Psi_{d_L,p}(x_0)\)\ldots\)$? 
\end{ques}

Certainly, a version of Question~\ref{ques:Endom} can be asked for any
finite subgroup.

\subsection{Orbit length statistics}

If $\eta$ behaves sufficiently randomly, then 
$\# O_{\eta,x_{0}}(\F_p) \le p^{1/2+o(1)}$
is very likely to hold.  In fact, it is known that for $\eta$ random,
$\(\# O_{\eta,x_{0}}(\F_p)\)^{2}/(2p)$ converges in distribution to a mean one exponential
as $p \to \infty$.  In particular,  the support of
$\log \# O_{\eta,x_{0}}(\F_p)/\log p$ is essentially concentrated around $1/2$.

See Figure~\ref{fig:fig12} for an illustration of this 
well-known phenomenon, which also forms the basis of the so-called 
{\it Pollard's rho-factorisation\/} algorithm, see~\cite[Section~5.2.1]{CrPom}.

\begin{figure}[H]
  \centering
\includegraphics[width=6cm]{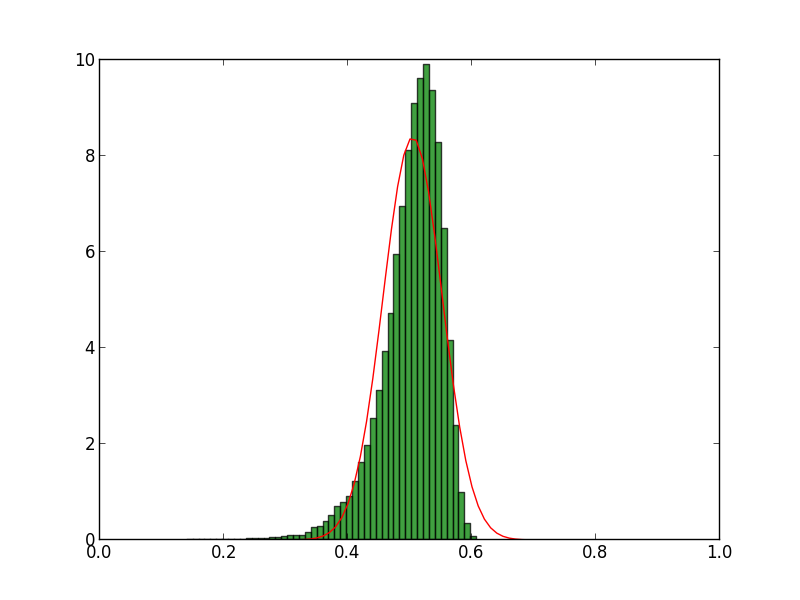}  
\includegraphics[width=6cm]{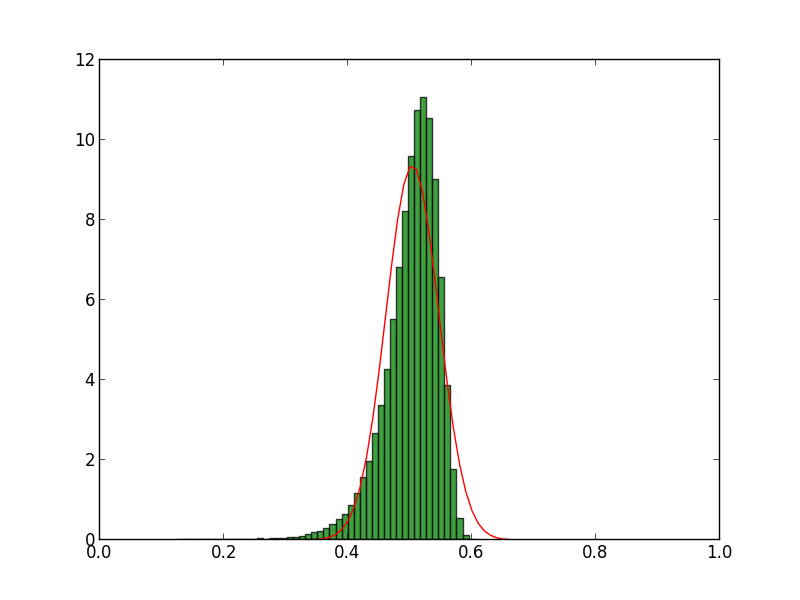}  
\caption{Histogram plot of $\log\# O_{\eta,x_{0}}(\F_p)/\log p$ with
  $\eta(x) = x^{2}+1$ for $p \leq 1000000$ (left) and $p \leq 5000000$
  (right).  Red curves indicate normal distributions with mean and
  variance fitted to the data.}
  \label{fig:fig12}
\end{figure}

However, the orbit sizes of $\psi_p$ behaves very differently.

We remark that if $p= 2q +1$ where $q$ is a Sophie Germain prime, then
the second effect is negligible.  Since the standard heuristic
suggests a (relative) abundance of Sophie Germain primes, ``on
average'' over primes $p$, the second effect is essentially
invisible. However for a ``typical'' prime the situation is quite
different.  In other words, under the standard heuristic expectation
of abundance of Sophie Germain prime, the average value of the
trajectory length is of order $p^{1/2}$ (possibly with some
logarithmic factors), while the typical value is much smaller.

Furthermore, let $P(k)$ denote the largest prime divisor of an integer $k\ge 1$.
If $\alpha \in (0,1)$ and $p$ runs through a sequence of primes with
$p-1 = q \cdot s$ where $q = P(p-1) = p^{\alpha+o(1)}$ and $s$ is
$p^{\alpha/2}$-smooth (which conjecturely holds for a positive
proportion of the primes for any $\alpha \in (0,1)$), we expect that a
random endomorphism has the orbit of size at most $p^{\alpha/2+o(1)}$.
In turn, this suggests that the probability density function of
$\log\# O_{\psi_p,x_{0}}(\F_p)/\log p$ is supported in the full
interval $[0,1/2]$;
see Figure~\ref{fig:fig34} for an illustration of
this phenomenon.

\begin{figure}[H]
  \centering
\includegraphics[width=6cm]{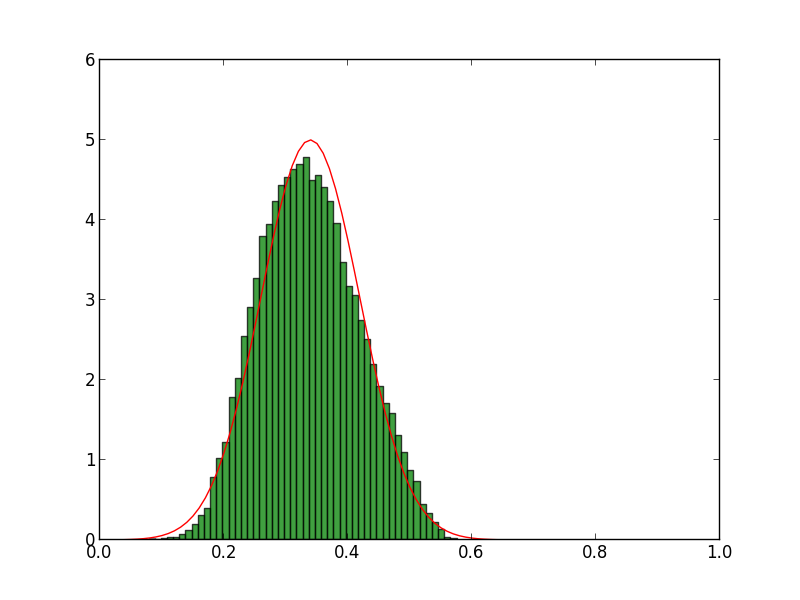}  
\includegraphics[width=6cm]{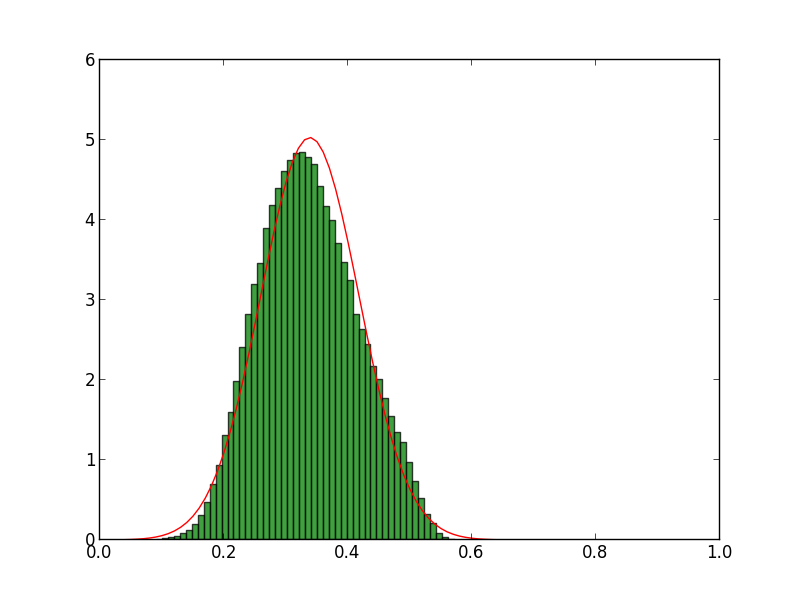}  
\caption{Histogram plots of $\log \# O_{\psi_p,x_{0}}(\F_p)/\log
  p$, $p \leq 1000000$ (left) and $p \leq 5000000$ (right).
 Red curves indicate normal distributions
with mean and variance fitted to the data.} 
\label{fig:fig34}
\end{figure}

To further show the difference in orbit statitics, it is also
interesting to compare statisticics when normalized by dividing by
$\sqrt{p}$, see Figure~\ref{fig:fig56}. 

\begin{figure}[H]
  \centering
\includegraphics[width=6cm]{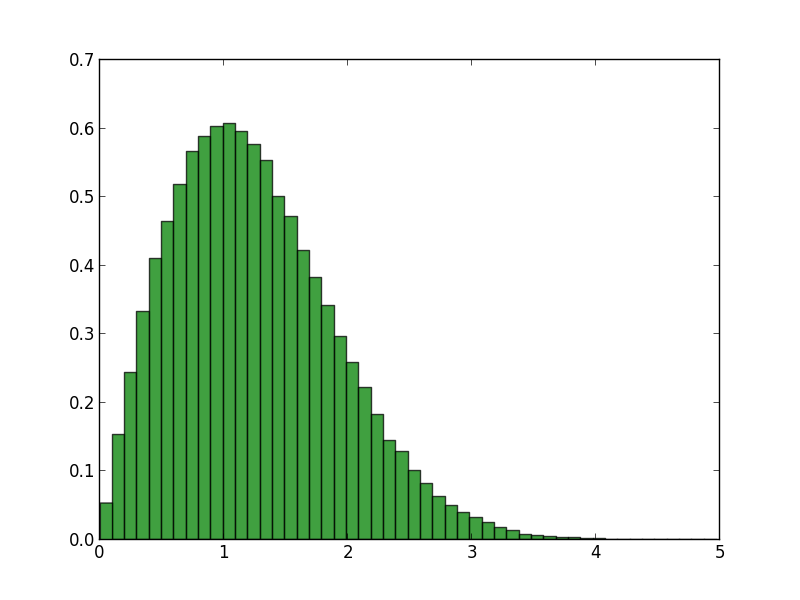}  
\includegraphics[width=6cm]{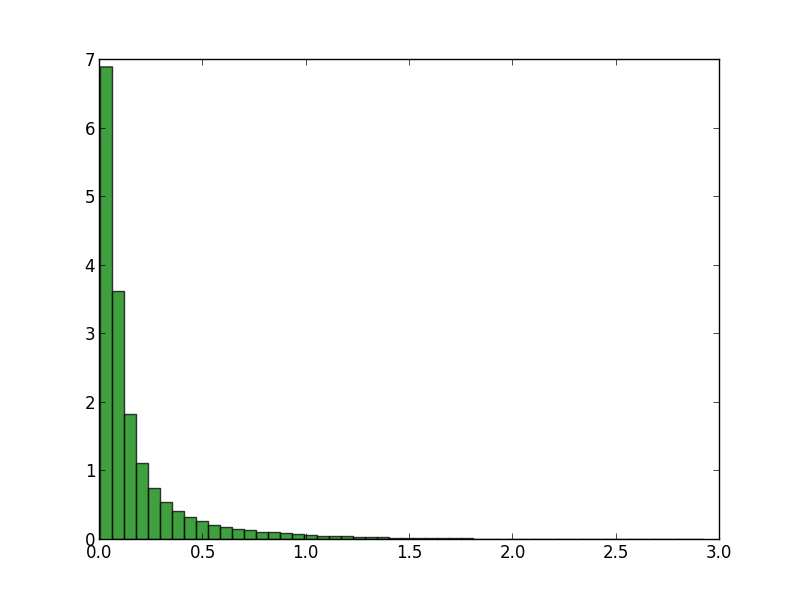}  
\caption{Histogram plots of $\# O_{\eta,x_{0}}(\F_p)/\sqrt{p}$ with
  $\eta(x) = x^{2}+1$ (left) and $\# O_{\psi_p,x_{0}}(\F_p)/\sqrt{p}$
  (right) for $p \leq 5000000$.}
\label{fig:fig56}
\end{figure}
 
\section{Comments and Extensions}
\label{sec:comm ext}

As we have mentioned in Section~\ref{sec:syst-lin-form}, it is natural 
to expect that the following holds:

 \begin{conj}
 \label{conj:LinForm} 
 Let $x_{0} \in \fq$.  Then
$$
\frac{\#\{ \vec{v} \in \fq^{d}~:~L_n(\vec{v})  \neq x_{0}~\text{for $1 \leq n
    \leq q$} \}| }{q^{d}} = e^{-1} + o(1), 
$$
as $q\to\infty$, where $d = \pi(q-1)$. 
\end{conj}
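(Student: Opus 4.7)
The plan is to prove the conjecture by Poisson approximation. Let $\vec v$ be uniformly distributed on $\fq^d$ and set
$$
Y = \#\{1\le n\le q : \cL_n(\vec v) = x_0\},
$$
so that $c(q) = \Pr[Y=0]$. I will aim to show $Y \Rightarrow \mathrm{Poisson}(1)$, which, by the method of moments (applicable since the Poisson distribution is determined by its moments), reduces to proving $\mathbb{E}[(Y)_k] \to 1$ for every fixed $k$, where $(Y)_k=Y(Y-1)\cdots(Y-k+1)$. Assuming $x_0\neq 0$ (the $x_0=0$ case is vacuous because $\cL_1\equiv 0$), the mean $\mathbb{E}[Y]=(q-1)/q\to 1$ is immediate, since each form $\cL_n$ with $n\ge 2$ is nontrivial and takes the value $x_0$ with probability exactly $1/q$.

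Expanding,
$$
\mathbb{E}[(Y)_k] = \sum_{(n_1,\ldots,n_k)}\Pr\bigl[\cL_{n_i}(\vec v)=x_0,\ \forall\,i\bigr],
$$
where the sum ranges over ordered distinct tuples in $[2,q]^k$; each summand equals $q^{-r}$ if the forms $\cL_{n_1},\ldots,\cL_{n_k}$ have $\fq$-rank $r$ \emph{and} the resulting affine system is consistent, and $0$ otherwise. Two structural observations will drive the argument: (i) for $q$ sufficiently large, every $\fq$-linear relation $\sum c_i \cL_{n_i}=0$ lifts to a genuine integer multiplicative relation $\prod n_i^{c_i}=1$ in $\Q^*$, because the exponents in the prime factorisation of any $n\le q$ are bounded by $\log_2 q$, so the $c_i$ may be chosen in a bounded integer interval; (ii) consistency of the affine system with right-hand side $x_0\neq 0$ forces $\sum_i c_i=0$ for every such relation, so only \emph{homogeneous} multiplicative relations survive. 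The main term comes from tuples with linearly independent forms: there are $(q)_k\bigl(1-o(1)\bigr)$ such tuples, each contributing $q^{-k}$, for a total of $1+o(1)$.

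The principal obstacle is then the quantitative counting bound
$$
\#\bigl\{(n_1,\ldots,n_k)\in[2,q]^k : \mathrm{rank}\{\cL_{n_i}\}=r,\ \text{system consistent}\bigr\} = o(q^r)
$$
for each fixed $r<k$, which would make the rank-$r$ stratum's contribution to $\mathbb{E}[(Y)_k]$ equal to $o(1)$. A consistent rank-$r$ configuration can be parametrised by an integer-multiplicatively independent basis $(n_{i_1},\ldots,n_{i_r})$ together with $k-r$ extensions of the form $n=\prod_s n_{i_s}^{c_s}$ subject to the additional constraint $\sum_s c_s=1$ imposed by consistency. For ``generic'' bases no such extension lies in $[2,q]$, but pinning this down uniformly amounts to a quantitative statement about $S$-unit equations over the multiplicative group generated by the primes $\le q$, and appears to require tools in the spirit of Evertse--Schlickewei--Schmidt or Corvaja--Zannier, together with elementary multiplicative-energy and divisor bounds. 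Once this counting step is in hand, the method of moments delivers $Y\Rightarrow\mathrm{Poisson}(1)$, and hence $c(q)=e^{-1}+o(1)$, as conjectured.
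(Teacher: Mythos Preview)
The statement you are treating is labelled a \emph{Conjecture} in the paper and is not proved there; the authors establish only the weaker inequality $c(q)\le 1-\vartheta+o(1)$ (their Lemma on $c(q)$) and explicitly leave the asymptotic $c(q)\to e^{-1}$ open. So there is no ``paper's own proof'' to compare against: what you have written is an attack on an open problem, and should be read as such.

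Your framework is the natural one. The reduction to factorial moments is correct, and your structural observations are sound: for fixed $k$ and $q$ large, the exponent vectors of $n_1,\dots,n_k\le q$ have entries $\le\log_2 q$, so every $r\times r$ minor is an integer of modulus at most $r!(\log_2 q)^r<q$, whence the $\fq$-rank agrees with the $\Q$-rank; and a short basis of the integral relation lattice (via Siegel--Bombieri--Vaaler) has entries of size $(\log q)^{O_k(1)}$, so the consistency condition $x_0\sum_j c_j\equiv 0\pmod q$ forces $\sum_j c_j=0$ over~$\Z$. Your remark that the conjecture is literally false for $x_0=0$ (since $\cL_1\equiv 0$) is also correct and worth recording.

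The genuine gap is exactly the one you flag: for each fixed $k$ and each $r<k$, one needs
\[
\#\bigl\{(n_1,\dots,n_k)\in[2,q]^k\ \text{distinct}:\ \mathrm{rk}_\Q\{\log n_i\}=r,\ \text{all relations homogeneous}\bigr\}=o(q^r).
\]
Two comments on this. First, your appeal to Evertse--Schlickewei--Schmidt or Corvaja--Zannier looks misplaced: those are theorems about \emph{additive} $S$-unit equations, whereas the constraint $\prod_i n_i^{c_i}=1$ with $\sum c_i=0$ is purely multiplicative---equivalently, the points $\log n_1,\dots,\log n_k$ are affinely dependent in the lattice $\bigoplus_p\Z$. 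This is a collinearity/flat-counting problem in a lattice, not a Diophantine approximation problem, and the relevant tools are more likely to be elementary (divisor sums, geometry of numbers, counting lattice points on affine subspaces). Second, even the first nontrivial case $k=3$, $r=2$ is not obviously easy: one must show that the number of ordered triples $(n_1,n_2,n_3)\in[2,q]^3$ with $\log n_1,\log n_2,\log n_3$ collinear is $o(q^2)$. A single direction such as $\log 2$ already contributes $\asymp q$ collinear triples, and summing over all ``short'' primitive directions requires care; a crude bound of $O(q^2\log q)$ is immediate but insufficient. So the obstacle is real, and your proposal does not yet overcome it---but the route to a proof, if one exists, is probably more elementary than you suggest.
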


In particular, Conjecture~\ref{conj:LinForm} implies that $\vartheta \simeq 0.4231\ldots$
in the bound of Theorem~\ref{thm:main} can be replaced with $1-1/e \simeq 0.6321\ldots$.

Clearly the map $\psi_p$, as any map over $\F_p$ can be interpolated by polynomial, 
that is, for some unique polynomials $F_p(X) \in \F_p[X]$ of degree at most $p-1$  we have 
$\psi_p(x) = F_p(x)$ for $x \in \F_p$. 
It is natural to use   $D_p = \deg F_p$ as a measure of 
``non-polynomiality'' of the map $\psi_p$. 
In particular, we expect that $D_p$ is 
close to its largest possible value $p-1$. Although we have 
not been able to establish this we  show that 
\begin{equation}
\label{eq:Dp}
D_p \ge \(\sqrt{2-\sqrt{3}}+o(1)\) p^{1/2} = 0.5176 \ldots p^{1/2}.
\end{equation} 
We remark that the $x^x$ is a quadratic non-residue modulo $p$
if and only if both $x$ is odd and a quadratic non-residue.
Using the P{\'o}lya--Vinogradov bound of sums of quadratic characters,
it is trivial to show that there are $p/4 + O(p^{1/2}\log p)$ 
such values of $x = 0,1\ldots, p-1$. 
Hence, for the sum of the Legendre symbols with $F_p$ we have 
$$
\sum_{x \in \F_p} \(\frac{F_p(x)}{p}\) = p/2  + O(p^{1/2}\log p).
$$
On the other hand, the results of Korobov~\cite{Kor} and 
Mit'kin~\cite{Mit} (which we use in a simplified form) imply that 
$$
\left|\sum_{x \in \F_p} \(\frac{F_p(x)}{p}\) \right|\le 
D_p\sqrt{p - D_p^2/4 + O(D_p)}
$$
(provided that, say, $p\ge D_p^2/2 +5$), 
which now implies~\eqref{eq:Dp}. 

For a prime $p$ and a polynomial $f(X) \in \Z[X]$ we denote by 
$T_{f}(p)$ 
the number of solutions to the congruence
\begin{equation}
\label{eq:Cong}
x^{f(x)} \equiv 1 \pmod p,  \qquad  1 \le x \le p-1.
\end{equation} 
We note that the number of fixed points of  $x \to x^{f(x)}$ is
given by $T_{f-1}(p)$.

\begin{theorem}
\label{thm:Tfp} 
If $f$ is squarefree, we have 
$$
T_{f}(p)  \le  p^{6/13 + o(1)}
$$ 
as $p\to \infty$.
\end{theorem}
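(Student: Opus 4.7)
The plan is to translate $x^{f(x)} \equiv 1 \pmod p$ into the divisibility condition $\ord_p(x) \mid f(x)$ and partition the solutions by $d = \ord_p(x)$. Write
$$
T_f(p) = \sum_{d \mid p-1} N_d, \qquad N_d = \#\{1 \le x \le p-1 : \ord_p(x) = d,\ d \mid f(x) \}.
$$
Since $d \mid f(x)$ depends only on $x \bmod d$, I further split $N_d = \sum_{a \in S_d} N_{d,a}$, where $S_d = \{a \in \Z/d\Z : f(a) \equiv 0 \pmod d\}$ and $N_{d,a}$ counts $x \in [1,p-1]$ of order exactly $d$ lying in the arithmetic progression $a + d\Z$. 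Because $f$ is squarefree in $\Z[X]$ its discriminant is a nonzero integer, so by the Chinese Remainder Theorem together with Hensel lifting at the finitely many bad prime powers, one obtains $|S_d| \le (\deg f)^{\omega(d)} = p^{o(1)}$, using $\omega(d) \le \omega(p-1) = O(\log p/\log\log p)$.

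Next I would fix a threshold $D = p^{\alpha}$ and use the trivial bound $N_{d,a} \le \varphi(d) \le d$ to control the contribution from $d \le D$ by $O(D^2 \cdot p^{o(1)})$. For $d > D$, I would bound $N_{d,a}$ using estimates for the distribution of elements of a multiplicative subgroup of $\F_p^*$ of order $d$ in fixed residue classes modulo $d$. Such estimates are rooted in exponential-sum bounds over subgroups as developed by Bourgain, Glibichuk, Konyagin, Heath-Brown, Cilleruelo, Garaev and others, and yield a power saving of the shape $N_{d,a} \le p^{o(1)}\, g(d,p)$ with $g$ capturing the multiplicative--additive interaction.

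Summing the resulting estimates over $d \mid p-1$ with $d > D$, and choosing $D$ to balance the small-$d$ and large-$d$ contributions, yields the exponent $6/13$. The main obstacle is the large-$d$ regime: one needs a sharp estimate for $|G \cap (a + d\Z)|$ with $G \subset \F_p^*$ a subgroup of order $d$, which is a genuine additive--multiplicative question that cannot be reached by soft arguments. The specific value $6/13$ --- as opposed to the exponent $1/3$ obtained in~\cite{BBS1} for the special case $f(X) = X-1$, where only the single residue $a = 1$ intervenes --- reflects the fact that for a general squarefree $f$ the bound must be summed over up to $(\deg f)^{\omega(d)}$ admissible residues $a$, and any improvement to the subgroup--in--progression estimate would directly translate into a sharper exponent in Theorem~\ref{thm:Tfp}.
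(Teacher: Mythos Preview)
Your skeleton---partition the solutions by a divisor $d$ of $p-1$, then by residue class $a \pmod d$ with $d\mid f(a)$, bound $|S_d|$ by $p^{o(1)}$ using that $f$ is squarefree, and split at a threshold $D$---is exactly the framework of the paper's proof (the paper partitions by $d=\gcd(f(x),p-1)$ rather than $d=\ord_p(x)$, but this is cosmetic). Two minor points: the small-$d$ contribution is $\sum_{d\mid p-1,\,d\le D} |S_d|\cdot d \le D\cdot p^{o(1)}$, not $D^2\cdot p^{o(1)}$, since there are only $\tau(p-1)=p^{o(1)}$ divisors; and your final paragraph misattributes the exponent $6/13$ to the number of admissible residues---that number is only $p^{o(1)}$ and plays no role in the exponent.

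The genuine gap is the large-$d$ step, which you leave as an appeal to unspecified ``exponential-sum bounds over subgroups''. The paper does not use such bounds. Instead it observes that the set $\cX_{d,a}$ of solutions in a fixed class has \emph{simultaneously} a small difference set (contained among multiples of $d$ in $[-(p-1),p-1]$, hence of size $O(p/d)$) and a small product set (contained in the subgroup of order $d$, hence of size $\le d$). This dual additive/multiplicative structure is then fed into a sum-product inequality of Bourgain and Garaev, which yields
\[
\#\cX_{d,a} \le \max\bigl\{p^{6/11}d^{-2/11},\ p^{1/2}d^{-1/8}\bigr\}\,p^{o(1)}.
\]
Balancing this against the trivial bound $\#\cX_{d,a}\le d$ at $d=p^{6/13}$ gives the stated exponent. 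Without identifying this sum-product mechanism (and the specific Bourgain--Garaev estimate), the proposal does not actually reach $6/13$; generic subgroup-in-progression bounds would give a different, typically weaker, exponent.
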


\begin{proof}
Let us fix $d \mid p-1$ and denote by $\cX_d$
the set of solutions to~\eqref{eq:Cong} with 
$$
\gcd(f(x),p-1) = d.
$$ 
Clearly any element $x \in \cX_d$ belongs to the multiplicative
group $\cG_d \subseteq \F_p^*$ of index $d$ in the multiplicative
group $\F_p^*$ of a finite field $\F_p$ of $p$ elements.
Therefore, 
\begin{equation}
\label{eq:Bound 1}
\# \cX_d \le d.
\end{equation}

Since $f(X)$ is squarefree, by the Nagell--Ore theorem
(see~\cite{Huxley} for its strongest known form) for each $d$ there is
a set $\cK_d \subseteq\{0, \ldots, d-1\}$ of cardinality 
$\# \cK_d = d^{o(1)}$ and such that 
every  $x \in \cX_d$  satisfies 
\begin{equation}
\label{eq:Cong k}
x \equiv k \pmod d
\end{equation}
for some $k \in \cK_d$.
Let us fix $k \in \cK_d$ and denote by $\cX_{d,k}$ the set 
of $x \in \cX_d$ satisfying~\eqref{eq:Cong k}. 
Obviously, 
\begin{equation}
\label{eq:Bound 2}
\# \cX_{d,k} \le (p-1)/d.
\end{equation}
Thus, in particular, from~\eqref{eq:Bound 1} and~\eqref{eq:Bound 2},
we see that  $\# \cX_{d,k} \le \sqrt{p-1}$. However, we now obtain a 
better bound. 

We remark that the difference set
$$
\cU_{d,k}  =\{x_1 - x_2 \ : \ x_1, x_2 \in \cX_{d,k}\} \subseteq \F_p
$$
is of cardinality at most
\begin{equation}
\label{eq:U}
\# \cU_{d,k} \le  2(p-1)/d
\end{equation}
as it is  contained in the reductions modulo $p$ of 
integers $y \equiv 0 \pmod d$ from the interval $y \in [-(p-1), p-1]$.
Similarly, for 
$$
\cW_{d,k} =\{x_1 +x_2- x_3- x_4 \ : \ x_1, x_2, x_3, x_4  \in \cX_{d,k}\} \subseteq \F_p,
$$
we have
\begin{equation}
\label{eq:V}
\# \cV_{d,k}  \le  4(p-1)/d.
\end{equation}
Furthermore, the product set 
$$
\cW_{d,k}  =\{x_1  x_2 \ : \ x_1, x_2 \in \cX_{d,k}\} \subseteq \F_p
$$
is of cardinality at most
\begin{equation}
\label{eq:W}
\# \cW_{d,k}   \le d
\end{equation}
as it is  contained in $\cG_d$. 
Finally, as in~\cite[Section~1]{BouGar}, we note that the Cauchy 
inequality implies that
$$
E_{d,k} = \# \{(x_1, x_2, x_3, x_4)  \in \cX_{d,k}^4 \ : \
x_1x_2 = x_3x_4\}
$$
satisfies
\begin{equation}
\label{eq:Energy}
E_{d,k}  \ge \frac{(\#\cX_{d,k} )^4} {\#  \cW_{d,k}}.
\end{equation}

By the result  of Bourgain and Garaev~\cite[Theorem~1.1]{BouGar} we
have
$$
E_{d,k}^4  \le \(\# \cU_{d,k} +  \frac{(\#\cX_{d,k} )^3} {p}\)
(\#\cX_{d,k} )^5  \(\#\cU_{d,k}\)^4\# \cV_{d,k} p^{o(1)}, 
$$
which together with~\eqref{eq:Energy} implies
\begin{equation}
\label{eq:BG}
(\#\cX_{d,k} )^{11}  \le \(\# \cU_{d,k} +  \frac{(\#\cX_{d,k} )^3} {p}\)
 \(\#\cU_{d,k}\)^4\# \cV_{d,k}  \(\#\cW_{d,k}\)^4 p^{o(1)}
\end{equation}
as $p\to\infty$.
Substituting the bounds~\eqref{eq:U}, \eqref{eq:V} and~\eqref{eq:W}
in~\eqref{eq:BG}, we derive 
$$
(\#\cX_{d,k} )^{11}  \le \(pd^{-1} +  \frac{(\#\cX_{d,k} )^3} {p}\)
 p^{5+o(1)} d^{-1}. 
$$
Thus, 
\begin{equation}
\label{eq:Bound 3}
\#\cX_{d,k} \le \max\left\{p^{6/11} d^{-2/11}, p^{1/2} d^{-1/8}\right\} p^{o(1)}.
\end{equation}
Using~\eqref{eq:Bound 1} for $d < p^{6/13}$ and~\eqref{eq:Bound 1}
for  $d \ge p^{6/13}$, we obtain 
$$
\#\cX_{d,k} \le p^{6/13+o(1)}, 
$$
as $p\to\infty)$, which concludes the proof.
\end{proof}

\begin{remark}
We note that as long as $d$ is square free, we have $\# \cK_d =
d^{o(1)}$ with no assumption of  $f$ being square free. Hence, we find that the upper bound on $T_{f}(p)$ holds without any
assumption on $f(x)$ provided that $p-1$ is square free.  In fact, it
is enough to assume that the square full part of $p-1$ is of size
$p^{o(1)}$.
\end{remark}

\begin{remark}
It is quite possible that using the results and arguments 
of Rudnev~\cite{Rud} one can improve the bound of Theorem~\ref{thm:Tfp}. 
\end{remark}

\section*{Acknowledgements}

Part of this work was done during visits of
F.~L. at KTH, Stockholm and Macquarie University, Australia and
P.~K. at the Mathematical Institute of the UNAM in Morelia, Mexico.
These authors thank these institutions for their hospitality and
support. 

P.~K. was partially supported by grants from the G\"oran
Gustafsson Foundation, the Knut and Alice Wallenberg foundation, the
Royal Swedish Academy of Sciences, and the Swedish Research Council,
F.~L.  was supported in part by Grants PAPIIT 104512, CONACyT 163787, CONACyT 193539  and a Marcos Moshinsky Fellowship, and I.~E.~S.\ was supported in part by ARC Grant
DP1092835.

\end{document}